\newcommand{\E}{{\mathbb E}}
\renewcommand{\P}{{\mathbb P}}
\newcommand{\R}{{\mathbb R}}
\newcommand{\cF}{{\mathcal{F}}}
\newcommand{\CA}{{\mathcal{A}}}
\newtheorem{theorem}{Theorem}
\newtheorem{corollary}[theorem]{Corollary}
\newtheorem{lemma}[theorem]{Lemma}
\newtheorem{remark}[theorem]{Remark}
\theoremstyle{definition}
\newtheorem{example}[theorem]{Example}
\numberwithin{equation}{section}
\numberwithin{theorem}{section}
\definecolor{darkgreen}{rgb}{0,0.7,0}
\newcommand{\iii}{{\vert\kern-0.25ex\vert\kern-0.25ex\vert}}
\newcommand{\PP}{\mathbb{P}}
\newcommand{\FF}{\mathbb{F}}
\newtheorem{thm}{Theorem}[section]
\newtheorem{lem}[thm]{Lemma}
\newtheorem{pro}[thm]{Proposition}
\newtheorem{defi}[thm]{Definition}
\newtheorem{rem}[thm]{Remark}
\def \exp{ {\rm exp} }
\def \ind{1\!\!1}
\def \cF{{\cal F}}
\def \ind{1\!\!1}
\def \lim{ {\rm lim} }
\let\inf\relax \DeclareMathOperator*\inf{\vphantom{p}inf}
\begin{document}
	
	\title{Gaussian Agency problems with memory and Linear Contracts}
	\author{Eduardo Abi Jaber\thanks{CMAP, Ecole Polytechnique Paris, route de Saclay 91128 Palaiseau Cedex}\\
	St\'ephane Villeneuve\thanks{Universit\'e Toulouse 1 Capitole (TSE-TSM-R), 1 esplanade de l'universit\'e, 31000 Toulouse, stephane.villeneuve@tse-fr.eu. The author acknowledges funding from ANR PACMAN and from ANR under grant ANR-17-EUR-0010 (Investissements d'Avenir program) and gratefully thanks the  FdR-SCOR ``Chaire March\'e des risques et cr\'eation de valeurs".  We are grateful to Thomas Mariotti and Takuro Yamashita for their comments and supports. We thank participants of Financial and Actuarial Mathematics Seminar at Liverpool, Financial and Insurance Mathematics at ETH,Math. Finance Seminar at Bielefeld, 10th AMaMeF Conference at Padova, Annual TSE-SCOR conference at Toulouse for their helpful comments and suggestions.}
	}

	\maketitle
	
\begin{abstract}

	Can a principal still offer optimal dynamic contracts that are linear in end-of-period outcomes when the agent controls a process that exhibits memory?
	We provide a positive answer by considering a general Gaussian setting where the output dynamics are not necessarily semi-martingales or Markov processes. We introduce a rich class of principal-agent models  that  encompasses dynamic agency models with memory. From the mathematical point of view, we develop a methodology to deal with the possible non-Markovianity and non-semimartingality of the control problem, which can no longer be directly solved by means of the usual Hamilton-Jacobi-Bellman equation. Our main contribution is to show that, for one-dimensional models, this setting always allows for optimal linear contracts in end-of-period observable outcomes with a  deterministic optimal level of effort.  In higher dimension, we show that linear contracts are still optimal when the effort cost function is radial and we quantify the gap between linear contracts and optimal contracts for more general quadratic costs of efforts.  
\end{abstract}

\noindent{\bf Keywords} Principal-Agent models, Continuous-time control problems.
\section{Introduction}
The extensive literature analyzing the dynamic principal-agent problem has shown that it is important but difficult to design the optimal shape of contracts in a tractable way. 
Indeed,  optimal contracts in dynamic agency problems are generally defined as complex functionals of a stream of contractible variables, such as revenues. Moreover, as first identified by \citet{rogerson1985repeated}  (see also \cite{laffont2009theory}) theoretical contracts exhibit memory, even in the most commonly used models that assume uncorrelated shocks, which unfortunately prevents them from matching real-world practices (see \citet{bolton2005contract}).  In addition, firms' revenues empirically show long memory\footnote{In this paper, we use indifferently the terms long (short) memory and long (short)-range dependence see definition 2.1. page 42 in \citet{BeranBook}.}    and  we lack a theoretical framework that justifies the signing of simple tractable contracts in an environment with inter-temporal links across time periods.\\ 
In a Brownian setting, the  breakthrough paper by \citet{holmstrom1987aggregation}  (HM)  shows that the optimal contract is linear in profits under some specific assumptions: the agent exerts effort continuously, principal and agent have CARA utilities, the agent bears a pecuniary cost of effort and finally the outcomes generated in the absence of effort are modeled by a fully observable Brownian motion.  Since then, several attempts have been made to obtain closed-form contracts in environments that relax at least one of the assumptions of the HM model.  \citet{sung1995linearity}  showed that the optimal contract is still linear when the agent also controls  the variance of the output. \citet{hellwig2002discrete} showed that linear contracts are nearly optimal in a discrete-time version of the HM model. \citet{edmans2011tractability}  and \citet{edmans2012dynamic} obtain striking general results in a discrete-time model where none of the four hypotheses is retained but where the agent makes its decision in each period after having observed the noise. However, they focus primarily on the cheapest implementation of a particular action, rather than on the objective of maximizing the principal's preference. Another important recent contribution beyond the Holmstrom and Milgrom setting has been made by \citet{carroll2015robustness} who showed that optimal contracts are linear in a general one-period model with uncertainty.\\

In this article, we enrich the Holmstrom and Milgrom modeling framework by going beyond the assumption that the revenues are driven by a Brownian motion.

 We will instead consider Volterra Gaussian processes that are a generalization of the standard Brownian motion to study time-dependent effects. More precisely, a Volterra Gaussian process is a Wiener integral process with respect to a standard Brownian motion involving a deterministic integrand called -{\it kernel}. Thus, at every point in time, it is an infinite linear combination of i.i.d. Gaussian random variables with time-dependent coefficients. Although we are aware of the shortcomings of the three remaining HM assumptions\footnote{For instance, \cite{edmans2011tractability} clearly argue {\it there is ample evidence of decreasing absolute risk aversion, and many effort decisions do not involve a monetary expenditure}}, our targeted choice is primarily motivated by the fact that, by allowing arbitrary integrand kernel functions in the Wiener integral,  our Volterra Gaussian agency model encompasses agency models with short and  long run autocorrelations. In particular, one of the main examples of Volterra Gaussian processes is the mean-reverting process which allows us to get closer of recent models of dynamic contracts with persistence such as those developed in \citet{williams2011ECMA} or career concerns as in \citet{cisternas18}.\\

For a long time, Volterra Gaussian processes have been considered as a natural tool for modeling continuous phenomena with memory.  In particular, the fractional Brownian motion (FBM), a Volterra Gaussian process with short and long range dependence, initially introduced by \citet{kolmogorov1940wienersche}, was  popularized by \citet{mandelbrot1968fractional}  in finance  to model the empirically-validated long-term dependence of stock returns. More recently, a stream of literature suggested the use of variants of the fractional Brownian motion in stochastic volatility modeling to capture the  roughness of the time series of the volatility of an asset which has been observed empirically in the market, see \citet{gatheral2018volatility,AJLP17} and the references therein. In general, such processes are  non-Markovian and non-semimartingales, which make their study more intricate, both theoretically and practically, and  prevents the use of standard stochastic calculus tools. Within the framework of optimal dynamic contracting theory,  the continuous-time semi-martingale models have received a lot of attention the past thirty years, when a significant progress has been made by relying on the recursive approach pioneered by \citet{Green1987}, \citet{spear1987repeated}, \citet{thomas1990income}. Discrete-time models were first developed (see \citet{clementi2006theory,demarzo2007optimal}, followed by \citet{biais2007dynamic}), while the breakthrough paper by \citet{sannikov2008continuous} resulted in the recent study of dynamic contracting in continuous-time models (see also \citet{demarzo2006optimal, biais2010large, demarzo2012dynamic}).
The main advantage of continuous-time semi-martingale models lies in the fact the procedure to find the optimal contract can be embedded in the standard theory of Markovian stochastic control using the theory of martingales and stochastic calculus, see \citet{schattler1993first} for a general presentation and \citet{cvitanic2018dynamic} for a rigorous mathematical justification. Hence, under a fairly general set of assumptions, the contract can be characterized unambiguously by solving an Hamilton-Jacobi-Bellman equation where the so-called agent promised value plays the role of a state variable.\\

 In allowing a non-semimartingale and non-Markovian setting, this paper makes an additional methodological contribution to solve for the optimal contract. The main idea is to use the so-called martingale optimality principle to study the agent and the principal problem sequentially as a Stackelberg game. The first step is to offer a class of incentive-compatible contracts by revisiting the martingale approach of \citet{schattler1993first} and \citet{sannikov2008continuous}.
The second step and our main contribution is to explicitly solve the principal problem which becomes a controlled stochastic Volterra problem. This requires the introduction of an auxiliary state variable - {\it the effort-corrected forward output} - which captures all the non-Markovianity and allows the application of the martingale optimality principle for the principal problem. In a one-dimensional setting, our key result is that the optimal contract is linear in the terminal value of the output, although the principal has in general a coarser information than the agent. The optimal contract has interesting features. The slope or marginal value of the contract is independent of the output dynamics, only the intercept depends on the latter as a function of the optimal effort which is proportional to the kernel. Therefore, random parts of contracts signed in different one-dimensional Gaussian environments are identical although the required effort levels are environment specific, deterministic and exhibit interesting features in relation with the properties of the kernel. \\

  We extend the paper with a discussion of the optimality of linear contracts in higher dimension. We address the multitask principal-agent problem in which a principal with CARA preferences hires a single agent with CARA preferences
  to perform different tasks. The outcome of each task is assumed to follow a Volterra Gaussian process whose evolution depends on the agent's continuous effort in each task while the profit is the sum of these different outcomes. Our main result is that there is no value in observing the agent's activities separately when the cost of effort is assumed to be a radial quadratic function. Under this assumption,  the optimal contract only uses aggregate information and is still linear in the end-of-period outcome. When we consider a general effort cost function, we characterize the contract that would be optimal if the principal were able to observe the Brownian filtration and measure the utility gap when a less-informed principal forces herself  to sign the best linear contract and identify factors that reduce the loss of utility associated with the use of linear contracts. This paper thus shows that linear contracts can closely achieve maximum principal utility in Gaussian environments.

\section{The one-dimensional model}
In this section, we present the economic model which is essentially an extension of the \citet{holmstrom1987aggregation} framework. \\

{\it General Description:} We consider a risk-averse investor, who owns a project and signs a fixed-term contract with a risk-averse manager, the latter being necessary to operate a project.
Time is continuous and the time horizon is $T>0$. In the absence of effort, the  stochastic output process $(X_t)_{t\leq T}$  of the project evolves up to time $T$ as
\begin{equation}\label{eq:Xintro}
X_t=g_0(t)+\int_0^t K(t,s)  dB_s,
\end{equation}
where  $B$ is a standard one-dimensional Brownian motion,  $g_0:[0,T]\to \R$ is a measurable deterministic input function, $K : [0,T]^2 \to \R$ is a measurable Volterra Kernel, i.e.  $K(t,s)=0$ for $s \geq  t$ such that 
\begin{align}
\sup_{t\leq T} \int_0^T K^2(t,s) ds < \infty. 
\end{align}

  We first observe that our model encompasses a large class of output dynamics that offers great modeling flexibility to model agency relationships in different sectors of the economy. Obviously,  this setting contains the Holmstrom and Milgrom Brownian model by choosing $g_0(t)=x_0$ and $K(t,s)=  \sigma$  for any pair  $s < t$ for some constant $\sigma$. Even more generally, the case of a time-dependent volatility can be recovered by setting $K(t,s)= \sigma(s) \ind_{s<t}$ for some square-integrable function $\sigma$. More interestingly, this framework also contains mean-reverting processes which are widely used to model output in the energy and mining sectors, if for instance, we choose $g_0(t)=e^{-\lambda t}x_0+ \frac{\mu_0}{\lambda} (1-e^{-\lambda t})$ and
$K(t,s)=e^{-\lambda(t-s)} \ind_{s<t}$, the output then follows the Ornstein-Ulhenbeck dynamics
$$
dX_t=(\mu_0-\lambda X_t)\,dt + dB_t.
$$
Another example is the Brownian bridge pinned for instance in $0$ at some time $T_0>T$ which falls into this category with a kernel given by
$$
K(t,s)=\frac{T_0-t}{T_0-s}\ind_{s< t}.
$$
The Brownian bridge has the semi-martingale decomposition
$$
dX_t=-\frac{X_t}{T_0-t}\,dt+dB_t,
$$
and may be used in any situation where the agent has access to information about the future output. For example, it can be used to model the output of a seasonal crop which will end up being zero after the harvest season.
A more striking example is given by  the family of fractional Gaussian processes such as:
\begin{itemize}
	\item 
	the Riemann-Liouville fractional Brownian motion  where for $s<t$,
	$$
	K(t,s)=c_H (t-s)^{H-1/2}, \,\quad H \in (0,1),  \mbox{ for some constant } c_H.
	$$
	\item
 the fractional Brownian motion  whose covariance function is $\Sigma_0(s,u)=\frac {1} 2 (s^{2H}+u^{2H}-|s-u|^{2H})$, for $H\in (0,1)$,  admits the Volterra representation \eqref{eq:Xintro} with the kernel	
	\begin{align*}
	K(t,s)= \ind_{s< t}\frac{(t-s)^{H-1/2}}{\Gamma(H+\frac 1 2)} \, {}_2 F_1\left(H-\frac 1 2; \frac 1 2-H; H+\frac 1 2; 1-\frac t s \right),
	\end{align*}
	where ${}_2F_1$ is the Gauss hypergeometric function,
	see  \citet{decreusefond1999stochastic}.
\end{itemize}
 Both types of fractional Brownian motion do not fall into the semi-martingale and Markovian frameworks when the so-called Hurst parameter $H$  is different than $1/2$ (which corresponds to the case of the standard Brownian motion):  they exhibit long range dependence when $H>1/2$ and short run dependence when $H<1/2$  and prove to be statistically very good models for industries related to power generation.\\
  Even more remarkably, equations with delay in the drift in  the form of linear integro-differential convolution equations:  
	\begin{align}\label{eq:stochastic integro}
	dX_t = \left( h(t) + \int_{[0,t]} \mu(ds) X_{t-s} \right) dt + \sigma dB_t , 
	\end{align}
	with initial condition $X_0 \in \R$, where $h:[0,T] \to \R$, $\mu:\mathcal B([0,T]) \to \R$ of bounded variation, $X_0 \in \R$ and $\sigma \in \R$ admit a unique solution in the form of a Gaussian Volterra process \eqref{eq:Xintro}
	for some specific choice of input curve $g_0:[0,T]\to \R$ and convolution kernel $K:[0,T]^2 \to \R$,  see Appendix \ref{Integro} for a detailed presentation of this observation.	For instance,	setting $\mu(dt) = \sum_{k=1}^m a_k \delta_{t_k} $, we recover equations with delay. Such equations fall into the semi-martingale framework, but are clearly not Markovian.\\
While very different in nature and in modeling objective, all these dynamics have in common to be Gaussian processes.

Another important observation about this framework relates to assumptions about the asymmetry in information, which has been interpreted by \cite{holmstrom1987aggregation} as a distinction between linear optimal contracts in outcomes $X$ and those in accounts $B$. We denote by $\FF^B$ the augmented filtration generated by $(B_t)_{t\leq T}$ and $\FF^X$ the one generated by the output process $(X_t)_{t\leq T}$. It readily follows from \eqref{eq:Xintro} that $\FF^X \subset \FF^B$. Hereafter, we assume that the agent has better information than the principal about the project in the sense that he has access to the full information $\FF^B$ while the principal observes only some aggregated information generated by the output $\FF^X$. In general, these two filtrations do not coincide even in one-dimensional models as shown in the following example corresponding to a situation where the principal observes the output in a discretionary way.\\

{\it Discrete observations of a Brownian motion:} Assume $X_t=f(t)B_t$ where $f$ is a bounded function on $[ 0,T ]$. Observe that $X$ is a Volterra process with $K(t,s)=f(t)\ind_{s \le t}$. Consider a subdivision $0<t_1<\ldots<t_n=T$ of the interval $[0,T]$ and let $f$ be the function defined as a linear combination of unit impulses
\begin{align}\label{eq:fdis}
f(t)=\sum_{i=1}^N \ind_{t_i}(t).
\end{align}
The output process is purely discontinuous with $X_{t_i}=B_{t_i}$ and $X_t=0$ for $t \neq t_i$ and may correspond to a situation where the principal performs audits at regular intervals. Therefore, $\FF^X$ is strictly included in $\FF^B$. We deduce that, even in a situation where the principal knows the agent is not exerting effort, the principal has a coarser information than the agent. In Volterra Gaussian models, we must therefore be careful that there may be asymmetric information between the principal and the agent regardless of the agency problem we introduce below.\\

{\it Agency problem:} We assume that the agent can exert a continuous effort $(a_t)_{t\leq T}$ that modifies the probability distribution of $X$ as follows
\begin{equation}\label{outputeffort}
X_t=g_0(t)+\int_0^t K(t,s)(a_s\,ds + dB_s^a),
\end{equation}
where $B^a$ is also a Brownian motion. 
As is customary in the agency theory literature, while the output process $X$ is observable by both players, the effort is the agent's private information. The agent's cost for exercising some effort level $a$ is modeled through a strictly convex $C^2$ function $k(a)$ satisfying $k(0)=0$.  To alleviate the exposition, we will assume hereafter that the effort cost function is quadratic,
\begin{align}\label{eq:costk}
k(a)=\kappa \frac{a^2}{2},  \quad  \mbox{for some } \kappa>0.
\end{align}
Hereafter and in accordance with the paper of \citet{holmstrom1987aggregation}, we model the preferences of the principal and the agent with CARA utility functions that are given respectively by
$$
U_P(x):=-\exp(-\gamma_P x)\, \hbox{ and } U_A(x):=-\exp(-\gamma_Ax), \quad  \forall x\in\R.
$$ 
In the beginning of the relationship, principal and agent agree on a contract of maturity $T$. To foster incentives, the contrat specifies a payment at time $T$ which is modeled by a random variable $\xi$ that is supposed to be $\cF_T^X$ measurable. We assume that both players can fully commit to the contract and that the agent has a reservation utility level $R_0=U_A(y_0)<0$ below which he will refuse the contract. The latter inequality is referred to the participation constraint of the agent who has the option to reject a contract and enjoy a utility of autarky $R_0$.\\

{\it Description of the probabilistic background:} For completeness, we recall the rigorous formulation of the agency problem in order to make understandable the first-order conditions that we will give in the next section. 
Let $(\Omega,\mathcal{F},  \FF :=(\mathcal F_t)_{t\leq T},\PP_0)$ be a filtered probability space on which a $\FF$--Brownian motion $B:=(B_{t})_{{t\leq T}}$  is defined with natural (completed) filtration  $\FF^B:=(\mathcal F^B_t)_{t\leq T}$.

\vspace{0.5em}
The firm's output or cash-flows observed by the principal are given by a stochastic process  $X$ with dynamics under $\PP_0$,
\begin{equation}\label{eq:dynX}
X_t=g_0(t)+\int_0^t K(t,s) d B_s,
\end{equation}
The impact of the agent's effort is modeled as a change of probability measure which changes the drift of the driving Brownian process.  More precisely, agent's  admissible actions are given by the following set  
 \begin{align*}{}
\mathcal A = \left\{ (a_t)_{t \leq T} \,  \FF\mbox{-progressively measurable: there exists $A>0$ s.t.~} \int_0^T a_s^2 ds \leq  A, \; \PP_0-\mbox{a.s.}  \right\}.
\end{align*}

 Observe that the set of admissible actions $\mathcal A$ is not empty because it contains bounded actions. Clearly,  any admissible process $a \in \mathcal A$ satisfies the Novikov's criterion
$$
\E\left[\exp\left( \frac{1}{2} \int_0^T a_s^2ds\right)\right]<+\infty,
$$
 which ensures that  the process $\left(\exp\left( \int_0^T {a_s}\,dB_s-\frac{1}{2}\int_0^T a_s^2\,ds\right)\right)_{0\le t\le T}$ is a martingale, see \citet[Proposition 5.12 p. 198]{karatzasshreve91}.
 We can therefore define  a family of equivalent probability measures $\PP_a$ by
$$
\frac{d\PP_a}{d\PP_0}=\exp\left( \int_0^T {a_s}\,dB_s-\frac{1}{2}\int_0^T a_s^2\,ds\right),
$$
where $a$ ranges  trough $\CA$.
Under $\PP_a$, the process $B^a=B-\int_0^{\cdot} a_s\,ds$ is a $\FF-$Brownian motion by Girsanov theorem and $X$ evolves as 
\begin{equation}\label{eq:dynX:Pa}
X_t=g_0(t)+\int_0^t K(t,s)(a_s\,ds+ d B_s^a).
\end{equation}
Because, the effort is unobservable, the principal only observes the trajectory of the output process $X$, the deterministic curve $g_0$ but not the last two terms of the decomposition \eqref{eq:dynX:Pa} separately. \\
Interestingly,  in the case of general Volterra processes, this model leads to a novel simple setting where we have persistence of past efforts on the output variation. To understand this, let us imagine that the agent makes a constant effort $a$ on the interval $[0,t]$ and then stops exerting effort after $t$. Then,  we have  for $h>0$, that 
$$
\E[(X_{t+h}-X_t)| \mathcal F_t]=g_0(t+h)-g_0(t)+\int_0^t (K(t+h,s)-K(t,s))(dB_s^a+a\,ds),
$$
which induces persistence of past efforts on the future output increments whenever the functions $K(t+h,.)$ and $K(t,.)$ are not identical.
Notice that in the HM model, the kernel is constant, so we recover that past efforts have no influence on future variations of the output.
\\

{\it The Principal-agent problem:} It is well known that principal-agent relationships can be viewed as a Stackelberg game. The principal moves first by offering a contract that consists in a compensation $\xi$, which belongs to the set of $\cF_T^X$ measurable random variables, to the agent. The latter then reacts by choosing an effort policy based on the information available at each date  inducing a probability measure $\PP_a$.
For any given contract $\xi$, let $V_0^A(\xi)$ denote the agent's utility at time $0$ which is defined as
\begin{equation} \label{agentutility}
V_0^A(\xi):=\sup_a \E^a\left( U_A\left(\xi - \int_0^T k(a_s) \,ds \right)\right)
\end{equation}
recall the definition of $k$ in \eqref{eq:costk}.
As common in agency problems, we define the concept of incentive-compatible contracts.
\begin{defi}\label{def:ic}
A contract $\xi$ is said to be {\it incentive compatible} if $V_0^A$ is finite and if  there exists an effort policy $a^*(\xi) \in \CA$ that maximizes \eqref{agentutility}, i.e. 
$$
V_0^A(\xi)=\E^{a^\ast(\xi)}\left( U_A\left(\xi - \int_0^T k(a^*_s(\xi)) \,ds \right)\right).
$$
\end{defi}
It is critical to understand what incentive-compatible contracts are, as these are the ones for which the principal can enforce desirable efforts.
As common in the literature, we will focus on a class $\Xi$ of contracts $\xi$ that are incentive-compatible (IC).  Before defining rigorously the class of IC contracts $\Xi$ we will focus on, we clarify the principal's problem.  By offering an incentive-compatible contract $\xi \in \Xi$, the principal will be able to anticipate the optimal effort level $a^\ast(\xi)$. Hence,  she will propose an incentive-compatible contract that maximizes the expected value of her CARA preference. Then, her aim is to solve
\begin{equation} \label{eq:Principalpb}
V_0^P:=\sup_{\xi \in \Xi} \E^{a^\ast(\xi)}\left[U_P\left(X_T-\xi\right)\right],
\end{equation}
under the participation constraint $\E^{a^\ast(\xi)}\left( U_A\left(\xi - \int_0^T k(a^*_s(\xi)) \,ds \right)\right) \ge R_0$. \\
The first result of this paper is given by the following theorem which shows that the problem \eqref{eq:Principalpb} admits an optimal contract which is linear in end-of-period outcomes. The result of the Holmstrom-Milgrom model thus extends to all Gaussian Volterra processes, even though these may exhibit very different statistical properties.
 Following \citet{schattler1993first}, we introduce the class of contracts we will focus on.  Let us define
 \begin{equation}\label{BSDEdrift}
f^\ast(z):=\frac{\gamma_A}{2}  |z|^2 + \inf_{a \in \mathbb R} \{k(a)-a z\} =\frac{\kappa \gamma_A-1}{2\kappa }z^2 ,
\end{equation}
and consider the following class $\Xi$ of Incentive Compatible contracts, see Proposition~\ref{IC} below,
$$
\Xi=\{ \xi=Y_T^{(y,\beta)} \in \mathcal{F}_T^X, \text{ where }y \ge y_0, \beta=(\beta_t)_{t\leq T} \in \CA \text{ and } Y_T^{(y,\beta)}=y+\int_{0}^T f^\ast(\beta_s) ds +\int_0^T \beta_s dB_s \}.
$$
We have:
\begin{theorem}\label{Main}
The optimal contract $\xi^*$ that maximizes the principal problem \eqref{eq:Principalpb} is linear in end-of-period profits $X_T$ and is given by
\begin{align}\label{eq:Maincontract}
\xi^*=y_0  -\frac{\gamma_P+1/\kappa}{\gamma_A+\gamma_P+1/\kappa} g_0(T)+ \frac{\kappa\gamma_A-1}{2\kappa} \int_0^T (\beta^*_s)^2\,ds+\frac{\gamma_P+1/\kappa}{\gamma_A+\gamma_P+1/\kappa}X_T,
\end{align}
and  the optimal level of recommended effort $a^*$ that maximizes the agent's problem \eqref{agentutility} is deterministic and given by $a^*=\frac{\beta^*}{\kappa}$ with
\begin{align}\label{eq:Maineffort}
\beta^*_t=\frac{\gamma_P+1/\kappa}{\gamma_A+\gamma_P+1/\kappa}K(T,t), \quad t \leq T.
\end{align}
\end{theorem}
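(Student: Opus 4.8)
The plan is to treat the principal--agent relationship as a Stackelberg game and solve it in two steps via a martingale optimality principle for each player. The genuine difficulty is that, once the agent's best response is substituted in, the principal faces an \emph{a priori} infinite-dimensional, non-Markovian problem: the payoff depends on the whole path of the adapted sensitivity $\beta$ and of $X$, with no obvious state variable or HJB equation. The decisive move is an exponential change of measure that factorises the noise into a genuine martingale times a purely ``deterministic'' functional, after which optimality collapses to an elementary pointwise convex minimisation whose minimiser turns out to be deterministic.

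\emph{Step 1 (the agent, and reduction of the participation constraint).} I first record that every $\xi = Y_T^{(y,\beta)} \in \Xi$ is incentive compatible with unique best response $a^*(\xi) = \beta/\kappa$ and value $U_A(y)$ (this is Proposition~\ref{IC}). For $a \in \Acal$, set $Y_t = y + \int_0^t f^\ast(\beta_s)\,ds + \int_0^t \beta_s\,dB_s$ and $Z_t = Y_t - \int_0^t k(a_s)\,ds$; under $\PP_a$, $dZ_t = \big(f^\ast(\beta_t) + \beta_t a_t - k(a_t)\big)\,dt + \beta_t\,dB^a_t$, and Itô's formula for $U_A(Z_t) = -e^{-\gamma_A Z_t}$ gives a drift equal to $\gamma_A\,U_A(Z_t)\big(\tfrac{\gamma_A}{2}\beta_t^2 + k(a_t) - a_t\beta_t - f^\ast(\beta_t)\big)$, which is $\le 0$ because $U_A(Z_t) < 0$ while the last factor equals $\big\{k(a_t) - a_t\beta_t\big\} - \inf_{a}\big\{k(a) - a\beta_t\big\} \ge 0$ by the very definition \eqref{BSDEdrift} of $f^\ast$, with equality iff $a_t = \beta_t/\kappa$ (unique, by strict convexity of $k$). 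Hence $U_A(Z)$ is a $\PP_a$-supermartingale, and a $\PP_a$-martingale when $a = \beta/\kappa \in \Acal$; taking expectations yields $V_0^A(\xi) = U_A(Z_0) = U_A(y)$, attained exactly at $a^*(\xi) = \beta/\kappa$. The participation constraint therefore reads $U_A(y) \ge R_0 = U_A(y_0)$, i.e.\ $y \ge y_0$; and since $U_P$ is increasing while lowering $y$ to $y_0$ only subtracts the constant $y - y_0$ from $\xi$ without altering the induced effort, it is optimal to take $y = y_0$.

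\emph{Step 2 (the principal's problem as a pointwise minimisation).} With $y = y_0$ and induced effort $\beta/\kappa$, I rewrite $\EE^{\beta/\kappa}\left[U_P(X_T - \xi)\right]$ under $\PP_0$ using $\tfrac{d\PP_{\beta/\kappa}}{d\PP_0} = \exp\big(\tfrac1\kappa\int_0^T\beta_s\,dB_s - \tfrac1{2\kappa^2}\int_0^T\beta_s^2\,ds\big)$ together with the $\PP_0$-representations $X_T = g_0(T) + \int_0^T K(T,s)\,dB_s$ and $\xi = y_0 + \int_0^T f^\ast(\beta_s)\,ds + \int_0^T\beta_s\,dB_s$. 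Writing $c := \gamma_P + 1/\kappa$, $\phi_s := c\,\beta_s - \gamma_P K(T,s)$, $\psi_s(z) := \gamma_P f^\ast(z) - \tfrac{z^2}{2\kappa^2} + \tfrac12\big(c\,z - \gamma_P K(T,s)\big)^2$, and $\Ecal_t := \exp\big(\int_0^t\phi_s\,dB_s - \tfrac12\int_0^t\phi_s^2\,ds\big)$ — the exponential martingale linearising the effort-corrected forward output $\widehat X_t := g_0(T) + \int_0^t K(T,s)\,dB_s$, which satisfies $\widehat X_T = X_T$ — collecting terms gives
\[
\EE^{\beta/\kappa}\left[U_P(X_T - \xi)\right] = -\,e^{-\gamma_P(g_0(T) - y_0)}\;\EE^0\!\left[\Ecal_T\, e^{\int_0^T \psi_s(\beta_s)\,ds}\right].
\]
A short computation shows $z \mapsto \psi_s(z)$ is a strictly convex quadratic with leading coefficient $\tfrac12\gamma_P(\gamma_A + \gamma_P + 1/\kappa) > 0$, minimised at $\beta^*_s = \frac{c}{\gamma_A + \gamma_P + 1/\kappa}\,K(T,s) = \frac{\gamma_P + 1/\kappa}{\gamma_A + \gamma_P + 1/\kappa}\,K(T,s)$; hence $\psi_s(\beta_s) \ge \psi_s(\beta^*_s)$ for every $\beta$, with equality ($dt\otimes\PP_0$-a.e.) iff $\beta \equiv \beta^*$.

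\emph{Step 3 (the martingale closes the bound; identification of $\xi^*$ and $a^*$).} Since $\beta \in \Acal$ gives $\int_0^T\beta_s^2\,ds \le A$ and $\sup_{t \le T}\int_0^T K^2(t,s)\,ds < \infty$, the exponent $\int_0^T\phi_s^2\,ds$ is bounded, so Novikov's criterion makes $\Ecal$ a true $\PP_0$-martingale with $\EE^0[\Ecal_T] = 1$; combined with the pointwise bound this gives $\EE^0\!\left[\Ecal_T\, e^{\int_0^T\psi_s(\beta_s)\,ds}\right] \ge e^{\int_0^T\psi_s(\beta^*_s)\,ds}$, and since the prefactor $-e^{-\gamma_P(g_0(T) - y_0)}$ is negative, $\EE^{\beta/\kappa}\left[U_P(X_T - \xi)\right]$ is maximised exactly at $\beta = \beta^*$. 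This is legitimate although $\Xi$ only contains $\beta$'s making $Y_T^{(y_0,\beta)}$ $\cF_T^X$-measurable: $\beta^*$ is deterministic and square-integrable (so $\beta^* \in \Acal$) and $\int_0^T\beta^*_s\,dB_s = \tfrac{\gamma_P+1/\kappa}{\gamma_A+\gamma_P+1/\kappa}(X_T - g_0(T))$, making $\xi^* := Y_T^{(y_0,\beta^*)} = y_0 + \int_0^T f^\ast(\beta^*_s)\,ds + \tfrac{\gamma_P+1/\kappa}{\gamma_A+\gamma_P+1/\kappa}(X_T - g_0(T))$ a deterministic affine function of $X_T$, hence $\cF_T^X$-measurable and in $\Xi$. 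Substituting $f^\ast(z) = \tfrac{\kappa\gamma_A - 1}{2\kappa}z^2$ gives exactly \eqref{eq:Maincontract}, and the recommended effort is the agent's best response $a^* = \beta^*/\kappa$, which is \eqref{eq:Maineffort} and is deterministic, being a scalar multiple of $K(T,\cdot)$. The remaining points are routine given the admissibility bounds — the passage from local to true (super)martingale in Step 1 and the integrability justifying the computations under the various $\PP_a$ — while the one conceptually delicate ingredient is precisely the martingale property of $\Ecal$, obtained from the interplay of the $\Acal$-bound on $\beta$ and the $L^2$-bound on the kernel.
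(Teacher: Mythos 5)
Your proof is correct, and it reaches the paper's conclusion by a genuinely different route. The paper runs a \emph{dynamic} argument: it introduces the effort-corrected forward output $g^\beta_t(T)$ as an auxiliary semimartingale state, applies It\^o's formula to $M^\beta_t=\exp(-\gamma_P(g^\beta_t(T)-Y^{y,\beta}_t)+\phi_t)$ (Lemma~\ref{L:M}), and invokes the martingale optimality principle: $M^\beta$ is a $\PP^\beta$-submartingale for every $\beta$ and a martingale at $\beta^*$. You instead work \emph{statically at the terminal time}: Girsanov back to $\PP_0$, insert the representations of $X_T$ and $\xi$, and complete the square in the exponent so that the payoff factorises as a Dol\'eans exponential $\Ecal_T$ times $e^{\int_0^T\psi_s(\beta_s)ds}$ with $\psi_s$ a deterministic strictly convex quadratic; optimality then reduces to a pointwise minimisation, whose minimiser $\beta^*_s\propto K(T,s)$ is exactly what makes $\xi^*$ an affine function of $X_T$ and hence $\cF^X_T$-measurable, closing the gap between $V_{SB}$ and $V_0^P$ just as in the paper. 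The two arguments share the same structural insight (only $K(T,\cdot)$ matters, via the forward output $\int_0^\cdot K(T,s)dB_s$) and the same completion of squares, but yours dispenses with It\^o's formula and the ODE for $\phi_t$ entirely, and it makes the one delicate integrability point — that the correction term is a true martingale — transparent via Novikov and the uniform bound $\int_0^T\beta_s^2\,ds\le A$ built into $\CA$ (a point the paper's sub/supermartingale claims treat somewhat implicitly). The trade-off is that the paper's dynamic formulation is the one reused verbatim in the multidimensional Section~\ref{S:multid}, whereas your static factorisation would have to be redone there; as a minor aside, your value $\int_0^T\psi_s(\beta^*_s)ds=\frac{\gamma_P}{2}\big(\gamma_P-\frac{(\gamma_P+1/\kappa)^2}{\gamma_A+\gamma_P+1/\kappa}\big)\int_0^TK(T,s)^2ds$ is consistent with the paper's $\phi_0$ up to what appears to be a typo ($\gamma_P^2$ versus $\gamma_P$) in the paper's displayed formula for $\phi_t$, which does not affect the theorem's statement.
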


\begin{proof}
		See Section~\ref{sectionP}. 
\end{proof}

Similarly to HM, the optimal compensation is made up of a deterministic base salary $y_0 -\frac{\gamma_P+1/\kappa}{\gamma_A+\gamma_P+1/\kappa} g_0(T)+ \frac{\kappa\gamma_A+1}{2\kappa} \int_0^T (\beta^*_s)^2\,ds$ and a random compensation to foster incentives $\frac{\gamma_P+1/\kappa}{\gamma_A+\gamma_P+1/\kappa}X_T$.
 One of the striking results is, when agents have CARA preferences,  the incentive part of the optimal contract, through the performance-based bonus coefficient $\frac{\gamma_P+1/\kappa}{\gamma_A+\gamma_P+1/\kappa}$, is common to all one-dimensional Volterra Gaussian models and thus independent of the output dynamics, even though they have very different statistical properties. Only the base salary is industry-specific depending on the output dynamic through the Volterra kernel $K$. 
The optimal effort level is deterministic and firm-specific and can, depending on the choice of the Volterra kernel, exhibit interesting behaviors. 
For instance, for the mean-reverting dynamics, $i.e.~K(t,s)=e^{-\lambda (t-s) } \ind_{s<t}$, the optimal effort is increasing if the mean-reverting intensity  $\lambda$ is positive.  The closer one gets to contract maturity, the more work the agent has to do. The intuition is that the optimal effort should compensate for the natural tendency of the process to revert to its long-term average. The closer the contract is to maturity, the greater the effort should be to allow $X$ to deviate from its long-term average and thus allow the principal to benefit from a greater profit.
When the mean-reverting intensity is negative, the effort must be greater at the beginning of the contract in order to give the necessary impetus to the process to diverge towards large positive values. Once this momentum is established, it is less effective to ask the agent to work.\\

The following two sections are dedicated to proving Theorem~\ref{Main}. Section~\ref{agentpb} solves the agent problem, while section~\ref{sectionP} solves the principal problem. An extension of Theorem~\ref{Main} to the multi-dimensional set-up is considered in Section~\ref{S:multid}.

\section{The  one-dimensional agent problem}\label{agentpb}
This section aims at  completely  solving the problem of the Agent in \eqref{agentutility}. The ideas developed here are not new, they rely on the martingale approach to stochastic control already used in \citet{schattler1993first} which we adapt to develop the first-order approach to principal-agent problems in a continuous-time Gaussian setting with exponential utilities. We will  show that the class $\Xi$ of contracts are incentive compatible contracts and design the optimal response of the agent for a given contract in $\Xi$. Our construction relies on the following martingale optimality principle that brings a clear intuition of the stochastic maximum principle used in the context of dynamic contracting by \citet{williams2011ECMA}.

\subsection{The Martingale optimality principle}
The Martingale optimality principle must be seen as a sufficient condition for a contract to be incentive-compatible. The following lemma, which is due to \citet{hu2005utility}  and  proved in Appendix \ref{MOP} for completeness, states this principle.

\begin{lem} \label{martingaleoptimality} Given a contract $\xi$, suppose the existence of a family of stochastic processes $R^a(\xi):=(R_t^a)_{t\leq T}$ indexed by $a \in \CA $ such that the following four assertions hold
\begin{itemize}
\item[i)] $R_T^a = U_A\left(\xi - \int_0^T k(a_s) ds\right), \quad \forall a \in \CA$,
\item[ii)] $R_{\cdot}^a$ is a $((\cF_t)_{t\in [0,T]},\PP^a)$-supermartingale for every $a$ in $\CA$,
\item[iii)] $R_0^a$ is independent of $a$,
\item[iv)] there exists $a^\ast$ in $\mathcal{A}$, such that $R^{a^\ast}$ is a $(\mathcal F_t)_{t\in [0,T]}$-martingale.
\end{itemize}
Then, $\xi$ is incentive compatible for the Agent problem \eqref{agentutility} and $a^\ast$ is the agent best reply.
\end{lem}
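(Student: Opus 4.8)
The plan is to run the standard verification (``martingale optimality'') argument. First I would fix an arbitrary admissible action $a\in\CA$ and combine the supermartingale property (ii) with the terminal identification (i): since $\cF_0$ is (the completion of) the trivial $\sigma$-field, taking $\PP^a$-expectations gives
$$R_0^a \;\ge\; \E^a[R_T^a] \;=\; \E^a\!\left(U_A\!\left(\xi-\int_0^T k(a_s)\,ds\right)\right),$$
and in particular $R_0^a$ is a finite real number because a supermartingale is by definition integrable at each time. Next I would specialize to $a^\ast$ and use (iv): since $R^{a^\ast}$ is a genuine $\PP^{a^\ast}$-martingale, the same computation produces the \emph{equality}
$$R_0^{a^\ast} \;=\; \E^{a^\ast}[R_T^{a^\ast}] \;=\; \E^{a^\ast}\!\left(U_A\!\left(\xi-\int_0^T k(a^\ast_s)\,ds\right)\right).$$
Then I would invoke (iii), which says $R_0^a=R_0^{a^\ast}$ for every $a$, and chain the two relations to obtain, for all $a\in\CA$,
$$\E^{a^\ast}\!\left(U_A\!\left(\xi-\int_0^T k(a^\ast_s)\,ds\right)\right) \;=\; R_0^{a^\ast} \;=\; R_0^{a} \;\ge\; \E^{a}\!\left(U_A\!\left(\xi-\int_0^T k(a_s)\,ds\right)\right).$$
Taking the supremum over $a\in\CA$ on the right-hand side yields $\E^{a^\ast}(\,\cdots) \ge V_0^A(\xi)$, while the reverse inequality $\E^{a^\ast}(\,\cdots)\le V_0^A(\xi)$ is immediate because $a^\ast$ is itself an admissible competitor in the supremum defining $V_0^A(\xi)$ in \eqref{agentutility}. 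Hence $V_0^A(\xi)=\E^{a^\ast}(U_A(\xi-\int_0^T k(a^\ast_s)\,ds))$, which is finite by the second display, and the supremum is attained at $a^\ast$; by Definition~\ref{def:ic} this is precisely incentive compatibility with $a^\ast$ as the agent's best reply.

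I do not expect a serious obstacle here: the statement is a soft verification lemma and the argument is essentially bookkeeping. The only points deserving a word of care are (a) that the expectations $\E^a(U_A(\cdots))$ are always well defined as elements of $[-\infty,0)$, which holds because $U_A<0$, so that the supermartingale inequality in the first step is legitimate regardless of a priori integrability concerns; and (b) that $R_0^{a^\ast}$ is a genuine finite number, which follows from the martingale (rather than merely supermartingale) requirement in (iv) and is exactly what delivers finiteness of $V_0^A(\xi)$ as demanded by Definition~\ref{def:ic}. All the genuine difficulty is displaced into the constructions carried out in the next sections: for each $\xi\in\Xi$ one must exhibit a family $(R^a)_{a\in\CA}$ satisfying (i)--(iv), and it is there that the exponential-utility/BSDE structure, the generator $f^\ast$ of \eqref{BSDEdrift}, and the convexity of $k$ (to guarantee the existence of the maximizing $a^\ast$) will do the real work.
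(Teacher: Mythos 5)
Your proposal is correct and follows essentially the same chain as the paper's proof: apply (i) and (ii) to bound $\E^a[R_T^a]$ by $R_0^a$, use (iii) to identify $R_0^a$ with $R_0^{a^\ast}$, and use (iv) to turn the bound into an equality at $a^\ast$. Your added remarks on finiteness and the sign of $U_A$ are sensible bookkeeping that the paper leaves implicit.
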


In the dynamic agency literature, the process $(R_t^{a^\ast})_{t\leq T}$ describes the Agent's expected utility given the contract $\xi$.
A contract $\xi$ is thus incentive compatible if we are able to build such a family $R^a(\xi)$. This will be done in the next section.

\subsection{Enlarging the class of Incentive Compatible Contracts}
 In accordance with the result of \citet{schattler1993first}, we expect that the contracts belonging to $\Xi$ are incentive compatible. It is at this point that a difficulty arises in our setting compared to the Brownian model of \citet{holmstrom1987aggregation} and more generally to the standard literature where the information sets of the two players coincide in the absence of moral hazard. Because the principal has a coarser information (recall that the paths of $B$ are not always observable by the principal), she cannot in general implement the process $(Y_t^{y,\beta})_{t\leq T}$ given by 
		\begin{align}\label{eq:Yproc}
Y_t^{y,\beta} = y + \int_0^t f^*(\beta_s)ds + \int_0^t \beta_s dB_s,		
		\end{align}
		  for  $y\geq y_0$ and $\beta \in \CA$, because $Y_t^{y,\beta}$ fails to be $\mathcal F^X_t$-measurable. In other words, the contracts in $\Xi$, that are the most natural to be incentive compatible are a priori inaccessible, unless we are able to characterize the
 controls $\beta \in \CA$ that induce $Y_T^{(y,\beta)} \in \mathcal{F}_T^X$.  Putting aside for a while this problem of information asymmetry between the two players, we consider a larger game where the principal is supposed to have the same information as the agent. We will forget for a while the constraint $\xi \in \mathcal F^X_T$ and introduce the enlarged set of contracts
 $$
 \hat \Xi=\{ \xi=Y_T^{(y,\beta)} \text{ where } y \ge y_0, \, \beta \in \CA,\quad Y_T^{(y,\beta)}=y+\int_{0}^T f^\ast(\beta_s) ds +\int_0^T \beta_s dB_s\}
 $$
 and naturally extend Definition~\ref{def:ic} of incentive-compatibility for $\mathcal F^B_T$-measurable contracts. We have the following result that we prove for sake of completeness using the Martingale optimality principle.

\begin{pro}\label{IC}
	Let $\hat \xi \in \hat \Xi$ be of the form 
	$$\hat \xi = y + \int_0^T f^*(\beta_s)ds + \int_0^T \beta_sdB_s,$$ with $y\geq y_0$ and $\beta \in \mathcal A$. Then,    $\hat \xi$ is incentive compatible for the Agent problem in \eqref{agentutility} and satisfies the participation constraint. Furthermore, the agent best reply is given by the effort $a^{*}(\hat \xi)= \frac{\beta}{\kappa}$  and the utility of the agent at $0$ is given by $V_0^A(\hat \xi)=-\exp(-\gamma_A y)$. 
\end{pro}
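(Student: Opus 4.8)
The plan is to verify the four hypotheses of the martingale optimality principle, Lemma~\ref{martingaleoptimality}, for the contract $\hat\xi = Y_T^{(y,\beta)}$. The natural candidate family is obtained by ``closing up'' the terminal utility along the path of the process $Y^{y,\beta}$ of \eqref{eq:Yproc}: for $a\in\CA$ set
\[
R_t^a := U_A\Big(Y_t^{y,\beta} - \int_0^t k(a_s)\,ds\Big) = -\exp\Big(-\gamma_A\big(Y_t^{y,\beta} - \textstyle\int_0^t k(a_s)\,ds\big)\Big),\qquad t\le T.
\]
Assertion (i) is immediate because $Y_T^{y,\beta}=\hat\xi$, and assertion (iii) holds since $R_0^a=-\exp(-\gamma_A y)$ does not depend on $a$.

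The heart of the argument is a drift computation. Under $\PP^a$ one has $dB_t=a_t\,dt+dB_t^a$ with $B^a$ a $\PP^a$-Brownian motion, so, writing $\Gamma_t^a:=Y_t^{y,\beta}-\int_0^t k(a_s)\,ds$, we get $d\Gamma_t^a=\big(f^*(\beta_t)+\beta_t a_t-k(a_t)\big)dt+\beta_t\,dB_t^a$. Applying It\^o's formula to $R_t^a=-\exp(-\gamma_A\Gamma_t^a)$ yields
\[
dR_t^a=\gamma_A R_t^a\Big(k(a_t)-\beta_t a_t-\big(f^*(\beta_t)-\tfrac{\gamma_A}{2}\beta_t^2\big)\Big)dt-\gamma_A R_t^a\beta_t\,dB_t^a .
\]
By the definition \eqref{BSDEdrift} of $f^*$ we have $f^*(\beta_t)-\tfrac{\gamma_A}{2}\beta_t^2=\inf_{a\in\R}\{k(a)-a\beta_t\}$, so the bracketed drift coefficient equals $\big(k(a_t)-a_t\beta_t\big)-\inf_{a\in\R}\{k(a)-a\beta_t\}\ge 0$, with equality if and only if $a_t$ is the pointwise minimizer, i.e.\ $a_t=\beta_t/\kappa$ for the quadratic cost \eqref{eq:costk}. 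Since $R_t^a<0$ and $\gamma_A>0$, the drift of $R^a$ is nonpositive, so $R^a$ is a local $\PP^a$-supermartingale for every $a\in\CA$.

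It remains to upgrade this to the genuine statements (ii) and (iv), and to read off the conclusions. For (iv), take $a^*:=\beta/\kappa$; then $a^*\in\CA$ since $\int_0^T(a_s^*)^2\,ds=\kappa^{-2}\int_0^T\beta_s^2\,ds\le A/\kappa^2$, the drift above vanishes identically, and $R^{a^*}=R_0^{a^*}\,\mathcal E\big(-\gamma_A\!\int_0^\cdot\beta_s\,dB_s^{a^*}\big)$ is a Dol\'eans--Dade exponential; as $\int_0^T\beta_s^2\,ds\le A$ a.s., Novikov's criterion applies and $R^{a^*}$ is a true $\PP^{a^*}$-martingale. For (ii), localize the stochastic-integral part by stopping times $\tau_n\uparrow T$: the stopped processes $R^a_{\cdot\wedge\tau_n}$ are true supermartingales (their finite-variation part is nonincreasing), and the family $\{R^a_{T\wedge\tau_n}\}_n$ is uniformly integrable, so passing to the limit gives that $R^a$ is a true $\PP^a$-supermartingale. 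Lemma~\ref{martingaleoptimality} then yields that $\hat\xi$ is incentive compatible with best reply $a^*(\hat\xi)=\beta/\kappa$, and that $V_0^A(\hat\xi)=R_0^{a^*}=-\exp(-\gamma_A y)$. Finally $y\ge y_0$ and the monotonicity of $U_A$ give $V_0^A(\hat\xi)=-\exp(-\gamma_A y)\ge -\exp(-\gamma_A y_0)=R_0$, which is the participation constraint.

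The only genuinely delicate point is the uniform-integrability step that turns the local supermartingale into a true one for arbitrary $a\in\CA$; the martingale case $a=a^*$ is easy precisely because the drift is exactly zero and $R^{a^*}$ is an honest stochastic exponential. To handle the general case one exploits that admissible $(a,\beta)$ satisfy an a.s.\ bound $\int_0^T(a_s^2+\beta_s^2)\,ds\le A'$, so that the finite-variation part of $\Gamma^a$ is bounded and $\exp\big(-(1+\varepsilon)\gamma_A\!\int_0^{\cdot}\beta_s\,dB_s^a\big)$ has bounded $(1+\varepsilon)$-moments (again via Novikov, using the a.s.\ bound on $\int_0^T\beta_s^2\,ds$), which is what makes the truncated family uniformly integrable. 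This is exactly the role played by the $L^2$ constraint built into $\CA$: without it one would have to impose exponential-integrability conditions on the admissible contracts by hand, as in the BSDE approach of \citet{hu2005utility}.
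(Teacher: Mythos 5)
Your proof is correct and takes essentially the same route as the paper: the same candidate family $R^a_t=-\exp(-\gamma_A(Y_t^{y,\beta}-\int_0^t k(a_s)ds))$, the same verification of (i) and (iii), and the same completion-of-squares/pointwise-minimization identity built into the definition of $f^*$, yielding the best reply $a^*=\beta/\kappa$ and the value $-\exp(-\gamma_A y)$. The one place where you diverge is the verification of the supermartingale property (ii): you compute the It\^o drift, conclude that $R^a$ is a local supermartingale, and then upgrade to a true supermartingale by localization plus a uniform-integrability estimate obtained from the a.s.\ bound on $\int_0^T\beta_s^2\,ds$. The paper instead factors $-R^a_t=e^{-\gamma_A y}\exp\big(\tfrac{\gamma_A\kappa}{2}\int_0^t(a_s-a^*_s)^2ds\big)M^a_t$ with $M^a$ a stochastic exponential, and shows $M^a$ is a true $\PP^a$-martingale by rewriting it as a single Dol\'eans--Dade exponential of $\int_0^\cdot(a_s-\gamma_A\beta_s)\,dB_s$ under $\PP_0$ and invoking Novikov; the supermartingale inequality then follows immediately from the factor being $\geq 1$, with no localization needed. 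Both arguments rest on exactly the same ingredient (the a.s.\ $L^2$ bounds in $\mathcal A$), and your UI estimate via $(1+\varepsilon)$-moments of the stochastic exponential is sound, so this is a stylistic rather than substantive difference; the multiplicative factorization is slightly cleaner, while your additive/drift computation is the one that generalizes verbatim to the multi-dimensional principal problem later in the paper.
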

 \begin{proof}
Fix $\hat \xi \in \hat\Xi$. Let $y\geq y_0$ and $\beta \in \mathcal A$ such that $\hat \xi =y+\int_{0}^T f^\ast(\beta_s) ds +\int_0^T \beta_s dB_s$ and define the process $Y^{(y,\beta)}$ by  \eqref{eq:Yproc}. 
 For an admissible effort policy $a=(a_t)_{t\leq T} \in \CA$, we define $R^a$ as 
$$ R^{a}_t:=-\exp\left(-\gamma_A \left(Y_t^{(y,\beta)} - \int_0^t k(a_s) ds \right)\right), \quad t\in [0,T].$$
We will show that the family $(R_t^a)_{t\leq T}$ satisfies condition i)--iv) of the Martingale optimality principle of Lemma~\ref{martingaleoptimality}. 	Observe that $Y_T^{(y,\beta)}= \hat\xi$ so that Lemma~\ref{martingaleoptimality}-i) is satisfied. Also, $R^a_0 = - \exp(-\gamma_A y)$ is independent of $a$ as needed in Lemma~\ref{martingaleoptimality}-iii). Furthermore,    
recalling that $B^a=B-\int_0^{\cdot} a_s\,ds$, we note that
\begin{align*}
Y_t^{(y,\beta)} - \int_0^t k(a_s) ds  &= y + \int_{0}^t \left(f^\ast(\beta_s)- k(a_s)\right) ds +\int_0^t \beta_s dB_s \\
&= y + \int_{0}^t \left(f^\ast(\beta_s)- k(a_s) + a_s \beta_s \right) ds +\int_0^t \beta_s dB^a_s.
\end{align*} 
Using the definition of $f^{\ast}$ in \eqref{BSDEdrift}, a completion of the squares  in $a_s$ yields the expression
$$ f^\ast(\beta_s)- k(a_s) + a_s \beta_s = -\frac{\kappa}{2}\left(a_s - a^\ast_s\right)^2 + \frac{\gamma_A}{2} \beta^2_s $$ 
with $a^\ast_s:=\frac{\beta_s}{\kappa}$
 so that combining the above leads to 
\begin{align*}
R_t^a =  - \exp\left( -\gamma_A y\right) \exp\left(\frac{\gamma_A\kappa}{2}\left(a_s -a^\ast_s\right)^2 \right)\exp\left(  -\frac{\gamma_A^2}{2} \int_0^t \beta_s^2 ds  - \gamma_A\int_0^t \beta_s dB^a_s\right)
\end{align*} 
 It remains to argue that the process $M^a:= \exp\left(-\gamma_A \int_0^{\cdot} \beta_sdB^a_s-\frac{\gamma_A^2}{2}\int_0^{\cdot}\beta_s^2\, ds\right)$ is a martingale under $\P^a$. Indeed, if this is the case, then, since $-\exp(\frac{\gamma_A\kappa}{2}(a_s -a^\ast_s)^2 \leq -1$,  
 $$ \mathbb E^a[R^a_T]  \leq -\exp(-\gamma_A y) \mathbb E^a[M_T^a] = -\exp(-\gamma_A y) = R^a_0,  $$ 
 which shows that $R^a$ is a $\PP^a$-supermartingale for each $a\in \mathcal A$, which corresponds to condition Lemma~\ref{martingaleoptimality}-ii). Furthermore, for $a=a^{\ast}$, we have that $R^{a^*}$ is a $\PP^{a^{\ast}}$-martingale which gives Lemma~\ref{martingaleoptimality}-iv). 
Obtaining  that  $M^a$ is a martingale under $\PP^a$ is equivalent to proving that 
$$
M_t=\exp\left( \int_0^t {a_s}\,dB_s-\frac{1}{2}\int_0^t a_s^2\,ds\right) \exp \left(-\gamma_A \int_0^t \beta_s \,dB^a_s-\frac{\gamma_A^2}{2}\beta_s^2\, ds\right)
$$
is a martingale under $\P^0$. But, observe that
$$
M_t=\exp\left( \int_0^t {(a_s-\gamma_A\beta_s)}\,dB_s-\frac{1}{2}\int_0^t (a_s-\gamma_A\beta_s)^2\,ds\right) 
$$
which is a martingale for $(a_t)_t$ and $(\beta_t)_t$ in $\mathcal A$. An application of Lemma \ref{martingaleoptimality} shows that $\hat\xi$ is incentive compatible such that the agent best reply is given by the effort $a^*_s(\hat \xi) = \frac{\beta_s}{\kappa}$. Finally, since $y\geq y_0$, the identity 
$$ V_0^A(\xi) =\mathbb E^{a^*}[R^{a^*}] = -\exp(- \gamma_A y) \geq -\exp(-\gamma_A y_0)=R_0, $$
shows that $\hat\xi$ satisfies the participation constraint  by giving  the required utility of the agent at $0$, which concludes the proof. 
\end{proof}
Notably, when the principal offers a contract parametrized by the pair $(y,\beta)$, the agent best reply is $\frac{\beta}{\kappa}$ and thus independent of $y$. This is due to the no wealth effect of CARA  preferences. The agent utility is $-\exp({-\gamma_A y})$ and thus independent of $\beta$.  This is due to the agent's full commitment allowing the principal to choose the best incentive contract that binds the participation constraint. To sum up, restricting our attention to contracts in $\hat \Xi$ transforms the puzzling principal's problem to a stochastic Volterra control problem, namely\footnote{ To alleviate notations, we will denote hereafter $\P^\beta$, the probability corresponding to the agent's effort choice $a=\frac{\beta}{\kappa}$.} 
\begin{equation}\label{secondbest}
V_{SB}=\sup_{y\ge y_0,\beta \in \CA} \E^\beta\left[ U_P\left(X_T-Y_T^{y,\beta}\right) \right]=\sup_{\hat \Xi} \E\left[ U_P\left(X_T-\hat\xi\right) \right].
\end{equation}
where $(Y_t^{y,\beta})_{t\leq T}$  is given by \eqref{eq:Yproc}.

The principal problem \eqref{secondbest} corresponds to the enlarged stochastic control problem where the principal would have access to the information generated by the Brownian motion. 
Clearly, the principal value \eqref{eq:Principalpb} satisfies $V_0^P \le V_{SB}$ because of the inclusion $\Xi \subset \hat \Xi$.
In the one-dimensional Brownian model,  \citet{holmstrom1987aggregation}  show that the two values coincide because the sets of information $\FF^B$ and $\FF^X$ are identical  ($\Xi=\hat \Xi$) and thus there is no need to introduce the { enlarged control problem.
Our contribution will be to show that the two values always coincide for one-dimensional Gaussian Volterra models, even if $\FF^X$ is strictly included in $\FF^B$. This is the object of the next section.

\section{The one-dimensional principal Gaussian problem}\label{sectionP}

This section is devoted to the explicit resolution of the  principal problem \eqref{secondbest} and to the proof of Theorem \ref{Main}. Contrary to the standard literature, the problem \eqref{secondbest}  is not a Markovian stochastic control problem because the process $X_t$ is not necessarily Markov. More precisely, it corresponds to a stochastic Volterra control problem with the following controlled processes 
\begin{align*}
X_t &= g_0(t) + \frac{1}{\kappa} \int_0^t K(t,s)\beta_s ds + \int_0^t K(t,s)dB_s^{\beta},\\
Y^{y,\beta}_t &= y + \frac{\kappa\gamma_A + 1}{2\kappa} \int_0^t \beta_s^2 ds + \int_0^t \beta_s dB^{\beta}_s.
\end{align*}
We will show that the optimal second-best contract exists and is furthermore  $\mathcal F^X_T$-measurable. As a consequence, the second-best principal value $ V_{SB}$ will coincide with the principal value $V_0^P$. In other words, our main message is that there is no gain to the principal in acquiring more information than that generated by the observed output process $X$ in one-dimensional Gaussian Volterra models, regardless of the definition of the kernel $K$. For instance, in the example of discrete observations of Brownian motion,  i.e.~$K(t,s)=f(t)\ind_{s \le t}$ with $f$ as in \eqref{eq:fdis}, there is no gain to the principal in increasing the frequency of the discrete observations of the Brownian output.\\

For  $y \geq y_0$ and  a control policy $\beta \in \CA$, we define 
$J(y,\beta)= \E^{\beta} \left[ \exp\left( - \gamma_P\left(X_T - Y_T^ {y, \beta} \right) \right) \right],$
in order to write the second-best principal problem
\begin{equation} \label{relaxed}
 V_{SB} = \inf_{y\geq y_0}V_{SB}(y), \, \text{ with }V_{SB}(y) =\inf_{\beta \in \CA} J(y,\beta). 
 \end{equation}
The rest of the section is dedicated to the proof of Theorem \ref{Main} that characterizes the optimal control for the principal problem \eqref{secondbest}.
The idea of the proof of Theorem \ref{Main} is to apply again the martingale optimality principle. To do this, we need to introduce a good family of processes indexed by $\beta$. Inspired by the agent problem, one possibility would be to consider the following family
$$
\exp\left(-\gamma_P \left( X_t - Y_t^{ y, \beta} \right) \right).
$$
Unfortunately, it may be impossible to apply It\^o's formula since the process $X$ may not be a semi-martingale, as in the fractional Gaussian processes case. To get around this problem, we introduce a new state variable that can be interpreted as a forward price which is a semi-martingale that coincides with $X$ at date $T$.
Let us define the {\it effort-corrected forward output} process by 
$$
g_t^{\beta}(T) = \E^{\beta}\left[ X_T -\frac{1}{\kappa}\int_t^T K(T,u) \beta_u du \mid \mathcal F_t\right].
$$
Using the output dynamics \eqref{outputeffort} with effort $\beta \in \CA$, we have
$$
g^\beta_t(T)= g_0(T) + \frac{1}{\kappa} \int_0^t K(T,u) \beta_u du + \int_0^t K(T,u) dB_u^{\beta}.
$$
Then, we observe that the process $(g^{\beta}_t(T))_{t \le T}$ is a semi-martingale on  $[0,T)$ with dynamics 
\begin{align}\label{eq:dynamicsg}
dg^{\beta}_t(T) = \frac{1}{\kappa}K(T,t) \beta_t dt + K(T,t) dB_t^{\beta}
\end{align}
and terminal value $g^\beta_T(T)=X_T$.
To apply the martingale optimality principle, we will consider the family of processes
\begin{align*} M_t^{\beta} = \exp\left(-\gamma_P \left( g^{\beta}_t(T) - Y_t^{ y, \beta} \right)  + \phi_t   \right),
\end{align*} 
where $\phi$ is the deterministic function given by 
$$ \phi_t  =  \frac{\gamma_P} 2 \left( \gamma_P^2 -  \frac{(\gamma_P + 1/\kappa)^2}{(\gamma_A  +\gamma_P+ 1/\kappa)}   \right) \int_t^T K(T,s)^2ds . $$
Lemma \ref{L:M}, which is proved in Appendix~\ref{MOP}, provides the dynamics of $M^{\beta}$ that plays a key role in the determination of the optimal contract.
\begin{lemma}\label{L:M}
	For each $\beta \in \CA$, we have 
	\begin{align}\label{eq:Mdynamics}
	\frac{dM_t^{\beta}}{M_t^{\beta}} = \frac{\gamma_P} 2 (\gamma_A  +\gamma_P+1/\kappa) (\beta_t - \beta_t^*)^2 dt +  \left(\gamma_P \beta_t -\gamma_P K(T,t) \right)  dB_t^{\beta}, \quad \mathbb P^{\beta}-a.s.  
	\end{align}
	with $\beta^*$ given by \eqref{eq:Maineffort}.
\end{lemma}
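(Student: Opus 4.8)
The plan is to derive \eqref{eq:Mdynamics} by a single application of Itô's formula to $M_t^{\beta}=\exp(Z_t)$, where $Z_t:=-\gamma_P\bigl(g_t^{\beta}(T)-Y_t^{y,\beta}\bigr)+\phi_t$. For a fixed $\beta\in\CA$ the process $g^{\beta}(T)$ is a continuous semimartingale on $[0,T)$ with the dynamics \eqref{eq:dynamicsg}, and $Y^{y,\beta}$ is a continuous semimartingale with $dY_t^{y,\beta}=\tfrac{\kappa\gamma_A+1}{2\kappa}\beta_t^2\,dt+\beta_t\,dB_t^{\beta}$ (its controlled dynamics recalled at the start of the section); since $\phi$ is $C^1$ and deterministic, $Z$, and hence $M^{\beta}$, is a continuous semimartingale on $[0,T)$, so Itô's formula applies there. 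Subtracting the two dynamics, scaling by $-\gamma_P$ and adding $\dot\phi_t\,dt$ gives
\[
dZ_t=\Bigl[-\tfrac{\gamma_P}{\kappa}K(T,t)\beta_t+\tfrac{\gamma_P(\kappa\gamma_A+1)}{2\kappa}\beta_t^2+\dot\phi_t\Bigr]dt-\gamma_P\bigl(K(T,t)-\beta_t\bigr)\,dB_t^{\beta},
\]
whose martingale part is $-\gamma_P\int_0^{\cdot}\bigl(K(T,s)-\beta_s\bigr)dB_s^{\beta}$, so that $d\langle Z\rangle_t=\gamma_P^2\bigl(K(T,t)-\beta_t\bigr)^2dt$.

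Then $dM_t^{\beta}/M_t^{\beta}=dZ_t+\tfrac12\,d\langle Z\rangle_t$, and the stochastic integrand is read off at once as $-\gamma_P\bigl(K(T,t)-\beta_t\bigr)=\gamma_P\beta_t-\gamma_P K(T,t)$, which is the claimed diffusion coefficient. For the drift, one collects
\[
-\tfrac{\gamma_P}{\kappa}K(T,t)\beta_t+\tfrac{\gamma_P(\kappa\gamma_A+1)}{2\kappa}\beta_t^2+\dot\phi_t+\tfrac{\gamma_P^2}{2}\bigl(K(T,t)-\beta_t\bigr)^2
\]
and views it as a quadratic polynomial in $\beta_t$ parametrised by $K(T,t)$. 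The coefficient of $\beta_t^2$ simplifies to $\tfrac{\gamma_P}{2}(\gamma_A+\gamma_P+1/\kappa)$ and that of $\beta_t$ to $-\gamma_P(\gamma_P+1/\kappa)K(T,t)$, so completing the square produces $\tfrac{\gamma_P}{2}(\gamma_A+\gamma_P+1/\kappa)(\beta_t-\beta_t^{*})^2$ with $\beta_t^{*}=\tfrac{\gamma_P+1/\kappa}{\gamma_A+\gamma_P+1/\kappa}K(T,t)$, precisely \eqref{eq:Maineffort}. The only use of the specific constant in $\phi$ is to check that the leftover multiple of $K(T,t)^2$, together with $\dot\phi_t$, matches exactly the $K(T,t)^2$-term demanded by the square; this is the algebraic identity that pins down $\dot\phi_t$, equivalently the definition of $\phi_t$. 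Assembling the drift and diffusion parts yields \eqref{eq:Mdynamics}.

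I do not expect a genuine obstacle here: the argument is routine stochastic calculus followed by a completion of squares. The only point needing a word of care is that in the fractional examples $K(T,\cdot)$ can be unbounded near $t=T$, so the pointwise identities above are stated on $[0,T)$; but because $\sup_{t\le T}\int_0^T K^2(t,s)\,ds<\infty$ and every admissible $\beta$ is bounded in $L^2$, all the integrals involved are finite, the processes $g^{\beta}(T)$ and $M^{\beta}$ extend continuously to $[0,T]$ with $g^{\beta}_T(T)=X_T$, and working on $[0,T)$ and letting $t\uparrow T$ loses nothing.
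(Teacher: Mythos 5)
Your proof is correct and follows essentially the same route as the paper's: apply It\^o's formula to $\exp(Z_t)$ using the semimartingale dynamics of $g^{\beta}_\cdot(T)$ and $Y^{y,\beta}$ under $\P^{\beta}$, identify the diffusion coefficient, complete the square in $\beta_t$ to produce $\frac{\gamma_P}{2}(\gamma_A+\gamma_P+1/\kappa)(\beta_t-\beta_t^*)^2$, and let the choice of $\phi$ absorb the residual $K(T,t)^2$ term. Your closing remark about working on $[0,T)$ and the continuous extension to $T$ is a point the paper leaves implicit, and your drift coefficients are the correct ones (the paper's displayed $dU^{\beta}_t$ contains a typographical $\kappa$ in place of $1/\kappa$, which your computation silently corrects).
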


\begin{proof}
		See Appendix~\ref{MOP}. 
\end{proof}

We can now complete the proof of Theorem~\ref{Main}.
\begin{proof}[Proof of Theorem~\ref{Main}] 
	$ \bullet$ \textit{The Principal's problem} is solved by an application of the martingale optimality principle on the process $M^{\beta}$. 	Fix $\beta \in \mathcal A$ and $y\geq y_0$.  We show that the family $M^{\beta}$ satisfies the four assertions of the martingale optimality principle in Lemma~\ref{martingaleoptimality}. We have,
\begin{itemize}
\item[i)] For all $\beta \in \CA$, it follows from \eqref{eq:Mdynamics} that $M^{\beta}$ is a $\P^{\beta}$-sub-martingale and thus
$$  M_0^{\beta} \leq \E^{\beta}\left[M_T^{\beta}\right] = \E^{\beta} \left[ \exp\left( -\gamma_P \left( X_T - Y_T^{y,\beta} \right) \right) \right]=J(y, \beta),$$
where we used that $g^{\beta}_T(T)= X_T$.
\item[ii)] Observe that $M_0^\beta=\exp\left(-\gamma_P \left( g_0(T) - y \right)  + \phi_0   \right)=M_0$ is independent of $\beta$.
\item[iii)]  Finally, for $\beta^*$ given by \eqref{eq:Maineffort}, $M^{\beta^*}$ is a $\P^{\beta^*}$-martingale by \eqref{eq:Mdynamics} and thus, we have 
$$ J(y,\beta^*)  =  \E^{\beta}\left[M_T^{\beta^*}\right] = M_0^{\beta^*} =  M_0 \leq J( y,\beta), \quad \beta \in \CA, $$ 
which shows that $\beta^*$ is the optimal control for the second-best principal problem and the principal value is given by $V_{SB}(y)=M_0$. 
\end{itemize}
Optimizing on $y\geq y_0$, yields $V_{SB}=\exp\left(-\gamma_P \left( g_0(T) - y_0 \right)  + \phi_0   \right)$.
Furthermore, since $\beta^*_t$ is proportional to the Volterra Kernel $K(T,t)$, it is straightforward to obtain the linear form of the contract $\xi^*=Y_T^{y_0,\beta^*}$  as in \eqref{eq:Maincontract}. In particular, $\xi^*$  is $\FF^X_T$-measurable as an affine function of $X_T$. Therefore, the optimal control for the enlarged principal problem \eqref{secondbest} induces an optimal contract in $\Xi$, so that $V_{SB}=V_0^P$.\\
$\bullet$\textit{The Agent's problem.} An application of Proposition~\ref{IC} yields that the optimal level of recommended effort $a^*$ that maximizes the agent's problem \eqref{agentutility}  is given by $a^*=\frac{\beta^*}{\kappa}$. 
\end{proof}

To sum up, this study shows that the transition from Brownian to Volterra models preserves the optimality of linear in end-of-period profit contracts. Moreover, in one-dimensional models, the principal does not suffer from having a 
coarser information than the agent about the dynamics of the production process. Aggregating production over time is sufficient for optimal compensation in Volterra Gaussian environments and there is no need to use all the available information - the brownian path in our setting- to design an optimal contract. The next section deals with the robustness of this result in the multi-dimensional set-up.

\section{The multi-dimensional model} \label{S:multid}
So far, we have assumed that the shocks are modeled by a one-dimensional Brownian motion. In this section,
we present a tractable class of multitask principal-agent problems, such as the one faced by a firm with a manager that supervises several projects. This model amounts to study the principal-agent problem in the case where the shocks are modeled by a standard Brownian motion of dimension $d$, that we also denote by $(B_t)_{t\le T}$.  As in Holmstrom and Milgrom,  the $i$-th component $B^i_t$ of $B_t$ is interpreted as the  outcome of the $i$-th account of the firm. We model the aggregate output or profit of the firm as follows
$$
X_t=g_0(t)+\int_0^t <K(t,s),dB_s>,
$$
where $<\cdot,\cdot>$ denotes the canonical inner product in $\R^d$, $g_0$ is a deterministic function and  $K : [0,T]^2 \to \R^d$ is a measurable Volterra Kernel, i.e.  $K(t,s)=0$ for $s \geq  t$ such that 
\begin{align}
\sup_{t\leq T} \int_0^T ||K(t,s)||^2 ds < \infty. 
\end{align}
For instance,  the case $K(t,s)=\sigma\ind_{s \le t},\,\sigma \in \R^d$ corresponds to the Brownian model in  \citet[Section 4]{holmstrom1987aggregation}. 
We can also consider a mining company that exploits two different types of minerals in two different mines. Each component of $X$ represents the revenue of a mine. This may correspond to the choice $K(t,s)=(e^{-\lambda_1(t-s)},e^{-\lambda_2(t-s)})$ where each separate outcomes follows a mean-reverting process with two different mean-reverting intensity $\lambda_i,\,i=1,2$.\\
Even more importantly than in the one-dimensional case, the filtration generated by the output $\FF^X$ is strictly included in the filtration generated by the multi-dimensional Brownian motion $\FF^B$ and thus, the principal has always a coarsest information than the agent even when the latter does not exert any hidden effort. This observation is central to Holmstrom and Milgrom's distinction between the optimal contracts that are linear  in profits or in accounts and more generally to understand when it is useless to use all the information generated by the Brownian motion. It is useful to recall here that
we focus in this paper on contracts that are $\mathcal F_T^X$ measurable.
\\
In a similar way to the one-dimensional case, we assume that the agent can exert a continuous vector of effort $(a_t)_{t\leq T} \in \R^d$, $a_t^i$ being the effort made by the manager to improve the account $i$, that modifies the probability distribution of $X$ as follows:
$$
X_t=g_0(t)+\int_0^t <K(t,s),dB_s +a_s\,ds>,
$$
Similarly to the one-dimensional case, we say that an agent's action $a=(a_t)_t$ is admissible if $a=(a_t)_t$ is $\FF$-progressively measurable and such that there exists $A>0$ such that 
$$
\int_0^T||a_s||^2\,ds < A, \quad \PP^0-a.s. 
$$ 
Still denoting by $\CA$ the set of admissible actions, we define for any $a \in \CA$ a family of equivalent probability measures $\PP_a$ by
$$
\frac{d\PP_a}{d\PP_0}=\exp\left( \int_0^T <a_s,dB_s>-\frac{1}{2}\int_0^T ||a_s||^2\,ds\right).
$$
Under $\PP_a$, the process $B^a=B-\int_0^{\cdot} a_s\,ds$ is a $\FF-$Brownian motion and the output dynamics is
\begin{equation}\label{outputeffortdimd}
X_t=g_0(t)+\int_0^t <K(t,s), dB_s^a+a_s\,ds>.
\end{equation}
We also assume that the agent incurs an instantaneous cost $k(a)$ where $k$ is a convex function on $\R^d$ with $k(0_{\R^d})=0$.  When the kernel is a constant vector, Holmstrom and Milgrom have considered the case where the effort cost function is $k(a)=g(\sum_{i=1}^d a_i)$ with $g$ strictly convex and have showed that the optimal compensation is linear in profit in that case. Nevertheless, this specification does not allow us to determine the optimal effort that the agent must make in each of his tasks. In this section, we will rather consider a quadratic effort cost function 
\begin{align*}
k(a)=\frac{1}{2}<a,\Gamma a>,
\end{align*}
 where $\Gamma$ is a symmetric positive-definite matrix.

When $\Gamma$ is proportional to the identity matrix,  i.e.~$\Gamma = \kappa I_d$ for some $\kappa >0$, we say that the effort cost function is radial, because, in this case, the effort cost function is proportional to the norm of the vector $a$. 
A radial cost is to assume that the effort costs are not specific to the different tasks that define the accounts. 

In the sequel, we will highlight the interplay between the choice of the matrix $\Gamma$ and the optimality of linear contracts. In a nutshell, our main results of this section in the multi-dimensional framework can be summarized as follows:
\begin{itemize}
	\item If $\Gamma$ is proportional to the identity matrix, then the optimal contract  $\xi^*$ is linear in the end-of-period profit $X_T$.  As in the one-dimensional model, the principal does not have to worry about her lack of information to sign an optimal contract.
	\item For more general matrices $\Gamma$, the optimal contract 
$\xi^*$ is no longer linear  in the end-of-period profit $X_T$. More importantly,  $\xi^*$ is not necessarily $\mathcal F_T^X$ measurable, meaning that the less-informed principal cannot implement/sign the contract $\xi^*$. In this situation, we quantify the gap between such contract $\xi^*$ and the best linear contract that can be implemented by the principal.  The gap can be interpreted as the value of information. 
\end{itemize}

\subsection{The agent's problem}
From a methodological viewpoint, there is no hurdle to adapt the techniques developed in Section \ref{agentpb}. As a consequence, we will roughly repeat the approach detailed in  Section \ref{agentpb} to apply the martingale optimality principle and consider a class of incentive-compatible contracts. To do this,  we assume for a while that the principal has access to the Brownian filtration generated by $(B_t)_t$ and can implement the controlled process
\begin{align}\label{eq:Y-beta-d}
Y_t^{ y,\beta}=y+\int_0^t f^*(\beta_s)dt+\int_0^t<\beta_s, d B_s>,
\end{align}
with $y \ge y_0$ and $\beta \in \CA$,  and for $z \in \R^d$,
\begin{align*}
f^*(z)=\frac{\gamma_A}{2} ||z||^2+\inf_a \left( \frac{1}{2}<a,\Gamma a> -<a,z> \right)  = \frac{1}{2} <z , \left(\gamma_A I_d - \Gamma^{-1}\right) z>
\end{align*}
to offer the wage $\xi=Y_T^{y,\beta}$ which is $\mathcal F^B_T$ measurable.  For a given contract $\xi$ defined by $(y,\beta) \in [y_0,\infty)\times \mathcal A$, the agent has to determine his best response $a^*(\xi)$. To apply the  martingale optimality principle, we introduce the family of processes indexed by $a$ given by
$$
R_t^a=-\exp\left( -\gamma_A \left(Y_t^{ y, \beta}-\int_0^t \frac{1}{2}<a_s,\Gamma a_s>\,ds\right)\right)
$$
The first-order condition gives the agent's best effort, $a^*(\xi)_t=\Gamma^{-1}\beta_t$. 
Furthermore, the agent utility  at time $0$ is given by $V_0^A(\xi) = R_0^a = -\exp(-\gamma_A y).$ 
We collect the result in the following proposition, which is the analogue of Proposition~\ref{IC} in the multi-dimensional framework. 
\begin{pro}\label{IC-d}  Let $\hat \xi$ be a contract of the form 
		$$\hat \xi = y + \int_0^T f^*(\beta_s)ds + \int_0^T <\beta_s, dB_s>,$$ with $y\geq y_0$ and $\beta \in \mathcal A$. Then,    $\hat \xi$ is incentive compatible for the Agent problem \eqref{agentutility} and satisfies the participation constraint. In particular, the agent best reply is given by the effort $a^{*}(\hat \xi)=\Gamma^{-1} {\beta}$ and the  agent utility  at time $0$ is given by $V_0^A(\hat \xi) = -\exp(-\gamma_A y).$
		\end{pro}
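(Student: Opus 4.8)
The plan is to mirror the proof of Proposition~\ref{IC} line by line, the only genuinely new ingredient being a matrix completion of squares that exploits the positive-definiteness of $\Gamma$ in place of the scalar $\kappa>0$. Fix $\hat\xi$ of the stated form, with $y\geq y_0$ and $\beta\in\CA$, and define $Y^{y,\beta}$ by \eqref{eq:Y-beta-d}. For each admissible effort $a\in\CA$ set
\[
R_t^a=-\exp\!\left(-\gamma_A\Big(Y_t^{y,\beta}-\tfrac12\int_0^t\langle a_s,\Gamma a_s\rangle\,ds\Big)\right),\qquad t\in[0,T],
\]
and verify the four conditions of Lemma~\ref{martingaleoptimality}. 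Conditions (i) and (iii) are immediate: $Y_T^{y,\beta}=\hat\xi$ and $k(a)=\tfrac12\langle a,\Gamma a\rangle$ give (i), while $R_0^a=-\exp(-\gamma_A y)$ does not depend on $a$, giving (iii).

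For (ii) I would pass to $\PP^a$ via the multidimensional Girsanov theorem, writing $\int_0^t\langle\beta_s,dB_s\rangle=\int_0^t\langle\beta_s,dB_s^a\rangle+\int_0^t\langle\beta_s,a_s\rangle\,ds$, so that
\[
Y_t^{y,\beta}-\int_0^t k(a_s)\,ds=y+\int_0^t\Big(f^*(\beta_s)-\tfrac12\langle a_s,\Gamma a_s\rangle+\langle a_s,\beta_s\rangle\Big)ds+\int_0^t\langle\beta_s,dB_s^a\rangle .
\]
The key algebraic step is: since $f^*(z)=\tfrac{\gamma_A}2\|z\|^2-\tfrac12\langle z,\Gamma^{-1}z\rangle$ and $\tfrac12\langle a,\Gamma a\rangle-\langle a,z\rangle+\tfrac12\langle z,\Gamma^{-1}z\rangle=\tfrac12\langle a-\Gamma^{-1}z,\Gamma(a-\Gamma^{-1}z)\rangle$, one obtains
\[
f^*(\beta_s)-\tfrac12\langle a_s,\Gamma a_s\rangle+\langle a_s,\beta_s\rangle=\tfrac{\gamma_A}2\|\beta_s\|^2-\tfrac12\langle a_s-a_s^*,\Gamma(a_s-a_s^*)\rangle,\qquad a_s^*:=\Gamma^{-1}\beta_s .
\]
Substituting back gives $R_t^a=-\exp(-\gamma_A y)\,\exp\!\big(\tfrac{\gamma_A}2\int_0^t\langle a_s-a_s^*,\Gamma(a_s-a_s^*)\rangle\,ds\big)\,M_t^a$ with $M^a:=\exp\!\big(-\gamma_A\int_0^{\cdot}\langle\beta_s,dB_s^a\rangle-\tfrac{\gamma_A^2}2\int_0^\cdot\|\beta_s\|^2\,ds\big)$. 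Because $\Gamma$ is positive-definite the middle factor is $\geq 1$, hence (conditioning and arguing exactly as in Proposition~\ref{IC}) $\EE^a[R_T^a\mid\mathcal F_t]\leq -\exp(-\gamma_A y)\,\EE^a[M_T^a\mid\mathcal F_t]$, provided $M^a$ is a true $\PP^a$-martingale; and that holds because, rewriting the two stochastic exponentials under $\PP^0$, $M^a$ equals $\exp\!\big(\int_0^\cdot\langle a_s-\gamma_A\beta_s,dB_s\rangle-\tfrac12\int_0^\cdot\|a_s-\gamma_A\beta_s\|^2\,ds\big)$, a Dol\'eans exponential of a process satisfying Novikov's criterion since $a,\beta\in\CA$ are $L^2$-bounded. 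This yields the $\PP^a$-supermartingale property (ii). For (iv) note that $a^*:=\Gamma^{-1}\beta\in\CA$, since $\int_0^T\|\Gamma^{-1}\beta_s\|^2ds\leq\|\Gamma^{-1}\|_{\mathrm{op}}^2\int_0^T\|\beta_s\|^2ds$, and for this choice the middle factor is identically $1$, so $R^{a^*}=-\exp(-\gamma_A y)M^{a^*}$ is a $\PP^{a^*}$-martingale.

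Lemma~\ref{martingaleoptimality} then delivers that $\hat\xi$ is incentive compatible with agent best reply $a^*(\hat\xi)=\Gamma^{-1}\beta$ and $V_0^A(\hat\xi)=\EE^{a^*}[R_T^{a^*}]=R_0^{a^*}=-\exp(-\gamma_A y)$; and since $y\geq y_0$ this gives $V_0^A(\hat\xi)=-\exp(-\gamma_A y)\geq-\exp(-\gamma_A y_0)=R_0$, i.e.\ the participation constraint holds. I do not expect a substantive obstacle: the argument is a transcription of the one-dimensional proof, with the scalar completion of the square replaced by its matrix analogue and the positive-definiteness of $\Gamma$ playing the role of $\kappa>0$. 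The only points deserving a word of care are the multidimensional Girsanov/Novikov reduction for $M^a$ and the verification that $\Gamma^{-1}\beta$ is still an admissible effort.
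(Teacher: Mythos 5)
Your proof is correct and follows exactly the route the paper intends: the paper gives no separate proof of Proposition~\ref{IC-d}, stating only that one "roughly repeats" the argument of Proposition~\ref{IC} with the first-order condition $a^*=\Gamma^{-1}\beta$, and your write-up is the faithful multi-dimensional transcription of that proof (matrix completion of squares using positive-definiteness of $\Gamma$, Girsanov/Novikov for the martingale property of $M^a$, and the admissibility check for $\Gamma^{-1}\beta$). No gaps.
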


We  can again re-write the Principal's problem as Volterra stochastic optimal control problem on the process
$$
Y_T^{ y,\beta}=y+\frac{1}{2}\int_0^T <\beta_t,\left(\gamma_A Id+\Gamma^{-1}\right)\beta_t>\,dt+\int_0^T<\beta_t, d B^\beta_t>,
$$
where the stochastic process $B^\beta=(B^\beta_t)_t$ is a d-dimensional Brownian motion under the probability measure indexed by $a^*_t=\Gamma^{-1}\beta_t$ that we will denote hereafter $\P^*$.\\
Then, without asymmetry of information,  the enlarged principal problem is given by
 $$ V_{SB} = \sup_{y\geq y_0} V_{SB}(y), $$
	with 
\begin{align}\label{eq:Vsb_d}
V_{SB}(y)=\sup_{\beta \in \CA}\E^* \left[ U_P\left( X_T-Y_T^{y,\beta} \right) \right]
\end{align}
and is an upper bound for the principal problem \eqref{eq:Principalpb} with the constraint $\xi \in \mathcal F^X_T$.

\subsection{The Enlarged Principal problem}
The idea is to mimic the methodology developed in details in the one-dimensional case. For this purpose, we reintroduce the effort-corrected forward output
$$
g_t^\beta(T)=\E\left[ X_T-\int_t^T <K(T,s), \Gamma^{-1} \beta_s>\,ds | \mathcal{F}_t  \right]
$$
and apply the martingale optimality principle to the process
\begin{align*} M_t^{\beta} = \exp\left(-\gamma_P \left( g^{\beta}_t(T) - Y_t^{y,\beta} \right)  + \phi_t   \right),
\end{align*} 
where $\phi$ is the deterministic function to determine.\\

  The following theorem gives the  optimal contract in a multi-dimensional setting for the enlarged principal problem.
	
	\begin{theorem}\label{Main-d}
	Let $\Gamma$ be symmetric positive-definite. 
	 The optimal level of  effort  $\Gamma^{-1} \beta^*$ that maximizes the enlarged principal's problem \eqref{eq:Vsb_d}  is deterministic and $\beta^*$ is given by
	\begin{align}\label{eq:betavsb-d}
	\beta^*_t=\left(\left(\gamma_A+\gamma_P\right) I_d+\Gamma^{-1}\right)^{-1}
\left(\gamma_P I_d+\Gamma^{-1}\right)K(T,t), \quad t \leq T.
	\end{align}
	The utility of the principal at time $0$ is given by 
	$$ V_{SB}  = V_{SB}(y_0) $$
	with 
	\begin{align}\label{eq:Vsb-explicit}
	 V_{SB}(y)= -\exp\left(-\gamma_P(g_0(T)-y) + \phi_0\right)
	\end{align}
	and
	\begin{align*}
	\phi_0 = \frac{\gamma_P}{2} \langle K_T, \left(\gamma_P I_d - \left(\gamma_P I_d + \Gamma^{-1}\right) \left(\left(\gamma_A + \gamma_P\right) I_d + \Gamma^{-1}\right)^{-1} \left(\gamma_P I_d + \Gamma^{-1}\right)\right) K_T\rangle_{L^2},
		\end{align*}
		where $K_T(s):=K(T,s)$ and $\langle f,g \rangle_{L^2}:=\int_0^T <f(s),g(s)>ds$. 
			The optimal contract $\xi^*$ that maximizes the principal problem  is  given by 
\begin{align}\label{eq:optimalcontract-d}
		\xi^*=y_0+  \int_0^T f^*(\beta^*_s) ds + \int_0^T <\beta^*_s,dB_s>. 
\end{align}
\end{theorem}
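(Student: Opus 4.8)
The plan is to transport the one-dimensional argument of Section~\ref{sectionP} to the matrix setting almost verbatim. By Proposition~\ref{IC-d} the enlarged principal problem has been reduced to the stochastic Volterra control problem \eqref{eq:Vsb_d} over $\beta\in\mathcal A$, with $Y^{y,\beta}$ as in \eqref{eq:Y-beta-d} and, under the measure $\P^\beta$ associated with the effort $\Gamma^{-1}\beta$, $X_T=g_0(T)+\int_0^T\langle K(T,s),\Gamma^{-1}\beta_s\rangle\,ds+\int_0^T\langle K(T,s),dB^\beta_s\rangle$. First I would record that the effort-corrected forward output $g_t^\beta(T)=\E^\beta[X_T-\int_t^T\langle K(T,s),\Gamma^{-1}\beta_s\rangle ds\mid\mathcal F_t]$ is a $\P^\beta$-semimartingale on $[0,T)$ with
\begin{align*}
dg_t^\beta(T)=\langle K(T,t),\Gamma^{-1}\beta_t\rangle\,dt+\langle K(T,t),dB_t^\beta\rangle,\qquad g_T^\beta(T)=X_T,
\end{align*}
just as in the scalar case. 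Setting $P:=\gamma_P I_d+\Gamma^{-1}$ and $Q:=(\gamma_A+\gamma_P)I_d+\Gamma^{-1}$ (both symmetric positive definite), I would apply It\^o's formula to $M_t^\beta=\exp(-\gamma_P(g_t^\beta(T)-Y_t^{y,\beta})+\phi_t)$. Using $dY_t^{y,\beta}=\tfrac12\langle\beta_t,(\gamma_A I_d+\Gamma^{-1})\beta_t\rangle dt+\langle\beta_t,dB_t^\beta\rangle$ and the quadratic variation $\gamma_P^2\|\beta_t-K(T,t)\|^2\,dt$ of the exponent, the drift of $M^\beta/M^\beta$ collapses, after grouping the quadratic and linear terms in $\beta_t$, to
\begin{align*}
\Big[\tfrac{\gamma_P}{2}\langle\beta_t,Q\beta_t\rangle-\gamma_P\langle PK(T,t),\beta_t\rangle+\tfrac{\gamma_P^2}{2}\|K(T,t)\|^2+\phi_t'\Big]\,dt .
\end{align*}

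Completing the square in $\beta_t$ then identifies the minimiser $\beta_t^*=Q^{-1}PK(T,t)$, which is exactly \eqref{eq:betavsb-d}, and leaves the residual $\phi_t'+\tfrac{\gamma_P}{2}\langle K(T,t),(\gamma_P I_d-PQ^{-1}P)K(T,t)\rangle$; imposing the terminal condition $\phi_T=0$ and choosing $\phi$ to cancel this residual yields $\phi_t=\tfrac{\gamma_P}{2}\int_t^T\langle K(T,s),(\gamma_P I_d-PQ^{-1}P)K(T,s)\rangle\,ds$, hence the stated $\phi_0$ in \eqref{eq:Vsb-explicit} with $PQ^{-1}P=(\gamma_P I_d+\Gamma^{-1})((\gamma_A+\gamma_P)I_d+\Gamma^{-1})^{-1}(\gamma_P I_d+\Gamma^{-1})$. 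This delivers the multidimensional analogue of Lemma~\ref{L:M},
\begin{align*}
\frac{dM_t^\beta}{M_t^\beta}=\tfrac{\gamma_P}{2}\langle\beta_t-\beta_t^*,Q(\beta_t-\beta_t^*)\rangle\,dt+\gamma_P\langle\beta_t-K(T,t),dB_t^\beta\rangle .
\end{align*}
From here the martingale optimality principle (Lemma~\ref{martingaleoptimality}) runs exactly as in the proof of Theorem~\ref{Main}: since $Q\succ0$ the drift is nonnegative, so $M^\beta$ is a $\P^\beta$-submartingale and $M_0=M_0^\beta\le\E^\beta[M_T^\beta]=\E^\beta[\exp(-\gamma_P(X_T-Y_T^{y,\beta}))]$; for $\beta^*$ the drift vanishes, $M^{\beta^*}$ is a $\P^{\beta^*}$-martingale, and equality holds, so that $\beta^*$ is optimal. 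Consequently $V_{SB}(y)=-\exp(-\gamma_P(g_0(T)-y)+\phi_0)$, which is decreasing in $y$, so the participation constraint binds, $V_{SB}=V_{SB}(y_0)$, and the optimal contract is $\xi^*=Y_T^{y_0,\beta^*}$, i.e.\ \eqref{eq:optimalcontract-d}.

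The only genuinely technical point — as already for Lemma~\ref{L:M} — is the integrability bookkeeping needed to upgrade ``local submartingale'' to ``submartingale'' and ``local martingale'' to ``martingale''. The key observation is that for admissible $\beta$ one has the a.s.\ deterministic bound $\int_0^T\|\beta_t-K(T,t)\|^2\,dt\le 2\big(A+\int_0^T\|K(T,t)\|^2 dt\big)$, which gives a Novikov condition for $\mathcal E\big(\gamma_P\int_0^\cdot\langle\beta_s-K(T,s),dB_s^\beta\rangle\big)$; since the accompanying drift factor $\exp\big(\tfrac{\gamma_P}{2}\int_0^t\langle\beta_s-\beta_s^*,Q(\beta_s-\beta_s^*)\rangle ds\big)$ is also a.s.\ bounded, $M^\beta$ is dominated by a constant multiple of a true martingale, which suffices; for the deterministic control $\beta^*$, square-integrability of $K(T,\cdot)$ makes $M^{\beta^*}$ a true martingale directly. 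I would carry out the It\^o computation and these estimates in an appendix, exactly as was done for Lemma~\ref{L:M}.
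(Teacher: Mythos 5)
Your proposal is correct and follows essentially the same route as the paper's proof: the effort-corrected forward output, It\^o's formula applied to $M^\beta$, completion of the square in $\beta_t$ with the matrices $Q=(\gamma_A+\gamma_P)I_d+\Gamma^{-1}$ (the paper's $\gamma_P I_d+D$) and $P=\gamma_P I_d+\Gamma^{-1}$, the choice of $\phi$ to kill the residual drift, and the martingale optimality principle. Your closing paragraph on Novikov-type integrability is a welcome extra level of care that the paper leaves implicit, but it does not change the argument.
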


\begin{proof}
	See Appendix~\ref{MOP}.
\end{proof}

We now make two important observations.
\begin{rem}\label{R:enlarged}
\begin{itemize}
	\item 
	We note that in general, the optimal contract \eqref{eq:optimalcontract-d} is not linear in $X_T$, indeed the term $\int_0^T <\beta_s^*, dB_s\rangle $ with $\beta^*$ given by \eqref{eq:betavsb-d} cannot be expressed in terms of the integral $\int_0^T <K(T,s),dB_s>$. 
	\item 	More importantly, $\xi^*$ is measurable with respect to $\mathcal F^B_T$ but not necessarily with respect to the smaller filtration $\mathcal F^X_T$, which means that such contract cannot be implemented by the  less-informed principal.  
\end{itemize}
\end{rem}

The following corollary shows that if the cost   is radial then the optimal contract is linear in $X_T$ and it can therefore  be implemented by the principal.  In Section \ref{S:valueofinfo} below, we study a class of optimal linear implementable contracts for the principal in case the cost is not radial.

\begin{corollary}\label{radial}
	Assume that the effort cost function is radial, i.e.~$\Gamma= \kappa Id$ for some $\kappa>0$. Then, the optimal level of effort that maximizes the enlarged principal's problem \eqref{eq:Vsb_d}  is deterministic and given by	
	\begin{align}\label{eq:betastard}
	\beta^*_t=\frac{\gamma_P+{1}/{\kappa}}{\gamma_A+\gamma_P+{1}/{\kappa}}K(T,t).
	\end{align}
In particular,  the optimal contract $\xi^*$ is linear in profits and given by
\begin{align*}
	\xi^*=y_0-\frac{\gamma_P+1/\kappa}{\gamma_A+\gamma_P+1/\kappa} g_0(T)+ \frac{\gamma_A + {1}/{\kappa}}{2} \int_0^T <\beta^*_s, \beta^*_s> ds + \frac{\gamma_P+{1}/{\kappa}}{\gamma_A+\gamma_P+{1}/{\kappa}} X_T.
\end{align*}
Furthermore, $V_{SB}=V_0^P$.
\end{corollary}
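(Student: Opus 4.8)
The plan is to obtain Corollary~\ref{radial} as a direct specialization of Theorem~\ref{Main-d} to $\Gamma = \kappa I_d$. The point is that $\Gamma^{-1} = \frac{1}{\kappa} I_d$ commutes with every matrix occurring in \eqref{eq:betavsb-d} and in the formula for $\phi_0$, so all the matrix products collapse to scalar multiples of $I_d$ and the multidimensional formulas reduce to their one-dimensional analogues; the only genuinely new thing to check is that the resulting optimal contract is $\mathcal F_T^X$-measurable, which then upgrades $V_{SB}$ to $V_0^P$.

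First I would substitute $\Gamma^{-1} = \frac{1}{\kappa}I_d$ into \eqref{eq:betavsb-d}. Since $(\gamma_A + \gamma_P)I_d + \Gamma^{-1} = (\gamma_A + \gamma_P + 1/\kappa)I_d$ and $\gamma_P I_d + \Gamma^{-1} = (\gamma_P + 1/\kappa)I_d$, one gets immediately $\beta^*_t = \frac{\gamma_P + 1/\kappa}{\gamma_A + \gamma_P + 1/\kappa}\,K(T,t)$, which is \eqref{eq:betastard}; it is deterministic because $K(T,\cdot)$ is, hence so is the recommended effort $\frac{1}{\kappa}\beta^*$. The same scalar collapse applied to the matrix sandwiched inside $\phi_0$ in Theorem~\ref{Main-d} turns it into a multiple of $I_d$, so $\phi_0$, and therefore $V_{SB}(y) = -\exp(-\gamma_P(g_0(T)-y)+\phi_0)$, take exactly the one-dimensional form; optimizing over $y \geq y_0$ gives $V_{SB} = V_{SB}(y_0)$.

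Next I would rewrite the contract \eqref{eq:optimalcontract-d} as a function of $X_T$. Writing $c := \frac{\gamma_P + 1/\kappa}{\gamma_A + \gamma_P + 1/\kappa}$, the crucial feature of the radial case is that $\beta^*_s = c\,K(T,s)$ is a \emph{scalar} multiple of the kernel vector, so that
$$
\int_0^T \langle \beta^*_s, dB_s\rangle = c \int_0^T \langle K(T,s), dB_s\rangle = c\,\big(X_T - g_0(T)\big),
$$
by the defining representation of $X_T$. Inserting this together with $f^*(\beta^*_s) = \frac{1}{2}\langle \beta^*_s, (\gamma_A I_d - \frac{1}{\kappa}I_d)\beta^*_s\rangle = \frac{1}{2}(\gamma_A - 1/\kappa)\|\beta^*_s\|^2$ into \eqref{eq:optimalcontract-d}, and collecting the deterministic terms, produces the announced affine function of $X_T$: slope $c$, and a deterministic intercept built from $y_0$, $g_0(T)$ and $\int_0^T \|\beta^*_s\|^2\,ds$.

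Finally, since $\xi^*$ is affine in $X_T$ it is $\mathcal F_T^X$-measurable, and it is incentive compatible and satisfies the participation constraint by Proposition~\ref{IC-d} (with $y = y_0$); hence it is admissible for the restricted problem \eqref{eq:Principalpb} and $V_0^P \geq \E^{a^*(\xi^*)}[U_P(X_T - \xi^*)] = V_{SB}$. Since the reverse inequality $V_0^P \leq V_{SB}$ always holds --- the $\mathcal F_T^X$-measurable contracts forming a subclass of those admissible in the enlarged problem \eqref{eq:Vsb_d} --- we conclude $V_{SB} = V_0^P$, exactly as in the last step of the proof of Theorem~\ref{Main}. I do not anticipate a genuine obstacle here: everything reduces to the scalar algebra enabled by $\Gamma^{-1} \propto I_d$, and the one conceptual input is precisely that this proportionality aligns $\beta^*$ with $K(T,\cdot)$, which is what collapses $\int_0^T \langle \beta^*_s, dB_s\rangle$ onto $X_T - g_0(T)$; for a general $\Gamma$ the components of $\beta^*$ combine those of $K(T,\cdot)$ with unequal weights and this collapse fails, as recorded in Remark~\ref{R:enlarged}.
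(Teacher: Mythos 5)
Your proposal is correct and follows the same route as the paper's own (one-sentence) proof: specialize Theorem~\ref{Main-d} to $\Gamma=\kappa I_d$ so that all matrices collapse to scalars, observe that $\beta^*$ is then a scalar multiple of $K(T,\cdot)$ so $\int_0^T\langle\beta^*_s,dB_s\rangle=c\,(X_T-g_0(T))$ makes $\xi^*$ affine in $X_T$ and hence $\mathcal F_T^X$-measurable, and conclude $V_{SB}=V_0^P$ by the two-sided inclusion of contract classes. (Incidentally, your coefficient $\tfrac{\gamma_A-1/\kappa}{2}$ for the $\int_0^T\|\beta^*_s\|^2\,ds$ term is the one consistent with the definition of $f^*$ and with Theorem~\ref{Main}; the $\tfrac{\gamma_A+1/\kappa}{2}$ displayed in the corollary appears to be a sign typo in the paper.)
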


\begin{proof}
	The expression for $\beta^*$ and $\xi^*$ follow directly from Theorem~\ref{Main-d}.  In particular, $\xi^*$  is $\mathcal F^X_T$-measurable as an affine function of $X_T$. Therefore, the optimal control for the enlarged principal problem \eqref{eq:Vsb_d}  induces a contract which is implementable by the principal, so that $V_{SB}=V_0^P$
\end{proof}

The message of  Corollary~\ref{radial} is very simple. When an agent has to allocate his time on several tasks and when his effort cost function is not specific to tasks and thus measured by the norm of the vector $a$, it is not necessary for the principal to scrutinize the revenues of each activity. It is sufficient to sign a linear contract in the final value of the aggregate profits to give the optimal incentives without regard to the optimal level of information. When the effort cost function is radial, the principal need not observe the paths of individual accounts to offer an optimal compensation that is linear in profits.  With this specification, we can disentangle the optimal efforts to allocate to the different tasks because the ith component to the optimal effort is proportional to the ith component of the kernel. In order to illustrate how an agent should optimally allocate his time to the different tasks he has to perform, let us consider the following toy example. A salesperson must visit two clients in two different geographical areas. We assume that the first geographical area generates Brownian outcomes and the second area generates  mean-reverting outcomes.
	The firm's aggregate output is given by
	$$
	X_t=B_t^1+\int_0^t e^{-\lambda (t-s)}\,dB_s^2, \text{ with }\lambda >0. 
	$$
	While the share of the output that goes to the agent is independent of the parameter $\lambda$, the saleperson must differentiate his customers visit. The first customer's visit must be on a constant basis and the second customer's visit must be accelerated as the maturity of the contract approaches.\\ 

\subsection{ The subclass of linear contracts and the value of information}\label{S:valueofinfo}
In this section, we  no longer assume that the cost is radial, we study the subclass of linear contracts,  and we quantify the incurred loss on the utility of the principal.
Beyond their simplicity,  the main advantage of linear contracts is that they are $\mathcal F_T^X$ measurable and can therefore be implemented by the  less-informed principal. 
We will define the value of information as the premium the principal would have to pay to access the agent's information and implement the optimal contract. 
We will consider contracts as in \eqref{eq:Y-beta-d} with $y\geq y_0$ but only for  controls $\beta$ in the form 
$$ \beta_t = b K(T, t), \quad  \mbox{for some } b\in \mathbb R. $$
Note that in this case, Proposition~\ref{IC-d} still apply to ensure that such contracts are Incentive Compatible  and the agent best reply is still $\Gamma^{-1}\beta$. Furthermore, for any $b\in \R$ the contract $Y_T^{y,b}$ is by construction linear in $X_T$ and given by 
\begin{align*}
Y_T^{y,b} = y + \int_0^T f^*(bK(T,s))ds + b (X_T-g_0(T)),
\end{align*}
so that $b$ is the share of the output that goes to the agent.
 In this case, the principal will optimize on $(y,b)$ to find the optimal linear contract
 \begin{align*}
 V_{{lin}} = \sup_{y\geq y_0}  V_{{lin}}(y), 
 \end{align*}
 with 
 \begin{align}\label{eq:Vlin_d}
 V_{{lin}}(y)=\sup_{b \in \mathbb R}\E^* \left[ U_P\left( X_T-Y_T^{y,b} \right) \right]
 \end{align} 
 
 A direct computation and optimization of the expectation leads to the following result for the optimal linear contract.
 \begin{theorem}
 Let $\Gamma$ be symmetric positive-definite. 
The optimal level of  effort  that maximizes the linear principal's problem \eqref{eq:Vlin_d}  is  given by $\Gamma^{-1}\beta^*$ with
\begin{align}\label{eq:betalin-d}
\beta^*_t&= b^* K(T,t),  \quad t \leq T,  \\
b^* &= \frac{\langle K_T, \left(\gamma_P I_d+\Gamma^{-1}\right) K_T \rangle_{L^2}}{\langle K_T, \left(\left(\gamma_A + \gamma_P\right) I_d+\Gamma^{-1}\right) K_T \rangle_{L^2}}.
\end{align}
The utility of the principal at time $0$ is given by 
$$ V_{lin}  = V_{lin}(y_0) $$
with 
\begin{align}\label{eq:Vlin-explicit}
V_{lin}(y)= -\exp\left(-\gamma_P(g_0(T)-y) + \chi_0\right)
\end{align}
and
\begin{align*}
\chi_0 = \frac{\gamma_P}{2} \langle K_T, \left(\gamma_P I_d -b^* \left(\gamma_P I_d + \Gamma^{-1}\right)\right) K_T\rangle_{L^2}.
\end{align*}
The optimal linear contract $\xi^*$ that maximizes the linear principal's problem  is  given by 
\begin{align*}
\xi^*=y_0-b^*g_0(T)+  \int_0^T f^*(\beta^*_s) ds + b^* X_T. 
\end{align*}

\end{theorem}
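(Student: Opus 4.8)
\medskip\noindent\textbf{Proof plan.} The plan is a direct computation, made possible by one simple observation: once the control is forced to be of the form $\beta_t = bK(T,t)$, the agent's best reply $a^* = \Gamma^{-1}\beta = b\,\Gamma^{-1}K(T,\cdot)$ is \emph{deterministic}. Hence, under the induced measure $\P^*$, the terminal output $X_T$ is Gaussian and the contract $Y_T^{y,b}$ is an affine function of $X_T$; unlike in Theorem~\ref{Main-d} there is no need for the martingale optimality principle or the effort-corrected forward output, and the problem reduces to optimizing the Laplace transform of a Gaussian random variable over the two scalars $b$ and $y$.

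First I would fix $y \geq y_0$ and $b \in \R$, note that $\beta = bK(T,\cdot) \in \CA$ since $K_T \in L^2$, and invoke Proposition~\ref{IC-d} to get that $Y_T^{y,b}$ is incentive compatible with best reply $a^* = b\,\Gamma^{-1}K(T,\cdot)$, and that $B^* = B - \int_0^\cdot a^*_s\,ds$ is a $d$-dimensional Brownian motion under $\P^*$. Plugging $a^*$ into \eqref{outputeffortdimd} then gives that $X_T$ is Gaussian under $\P^*$ with mean $g_0(T) + b\,\langle K_T,\Gamma^{-1}K_T\rangle_{L^2}$ and variance $v := \langle K_T,K_T\rangle_{L^2}$. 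Using the explicit conjugate $f^*(z) = \tfrac12\langle z,(\gamma_A I_d-\Gamma^{-1})z\rangle$ together with $Y_T^{y,b} = y + \int_0^T f^*(bK(T,s))\,ds + b\,(X_T-g_0(T))$, I would rewrite $X_T - Y_T^{y,b}$ as the affine function $(1-b)X_T + b\,g_0(T) - y - \tfrac{b^2}{2}\langle K_T,(\gamma_A I_d-\Gamma^{-1})K_T\rangle_{L^2}$ of $X_T$.

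Since $U_P(x) = -\exp(-\gamma_P x)$, the objective in \eqref{eq:Vlin_d} equals $-J(y,b)$ with $J(y,b) := \E^*[\exp(-\gamma_P(X_T-Y_T^{y,b}))]$; applying the Gaussian moment generating function and checking that the $g_0(T)$-dependent terms cancel, I expect $J(y,b) = \exp\big(\gamma_P(y-g_0(T)) + \gamma_P\psi(b)\big)$, where $\psi$ is the scalar quadratic $\psi(b) = \tfrac12\big((\gamma_A+\gamma_P)v+P\big)b^2 - (\gamma_P v + P)\,b + \tfrac12\gamma_P v$ with $P := \langle K_T,\Gamma^{-1}K_T\rangle_{L^2}$. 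Because $\Gamma$ is positive definite and $\gamma_A+\gamma_P>0$, the leading coefficient $(\gamma_A+\gamma_P)v + P$ is strictly positive whenever $K_T\not\equiv 0$ (the degenerate case being trivial), so $\psi$ is strictly convex with unique minimizer $b^* = (\gamma_P v + P)/((\gamma_A+\gamma_P)v+P)$; rewriting the scalars as the pairings $\langle K_T,(\gamma_P I_d+\Gamma^{-1})K_T\rangle_{L^2}$ and $\langle K_T,((\gamma_A+\gamma_P)I_d+\Gamma^{-1})K_T\rangle_{L^2}$ recovers \eqref{eq:betalin-d}. Substituting back and simplifying $\gamma_P\psi(b^*) = \tfrac{\gamma_P}{2}\big(\gamma_P v - b^*(\gamma_P v + P)\big) = \chi_0$ gives \eqref{eq:Vlin-explicit} for $V_{lin}(y) = -J(y,b^*)$; and since $\chi_0$ does not depend on $y$ while $y\mapsto \exp(\gamma_P(y-g_0(T))+\chi_0)$ is increasing, the supremum over $y\geq y_0$ is attained at $y=y_0$ (the participation constraint binds), so $V_{lin} = V_{lin}(y_0)$. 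Finally $\xi^* = Y_T^{y_0,b^*} = y_0 - b^* g_0(T) + \int_0^T f^*(\beta^*_s)\,ds + b^* X_T$ with $\beta^*_s = b^* K(T,s)$, using $b^*(X_T-g_0(T)) = b^* X_T - b^* g_0(T)$.

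I do not expect a genuine obstacle here: the whole argument is a Gaussian Laplace-transform computation followed by elementary one-variable optimization. The only structural input is the strict convexity of $\psi$, which follows from the positive-definiteness of $\Gamma$ and guarantees both existence/uniqueness of the optimal $b^*$ and that the interior critical point is the global minimum. The two points requiring care are purely algebraic: checking in the Laplace-transform step that all $g_0(T)$-terms cancel, so that $J$ depends on $y$ only through $\gamma_P(y-g_0(T))$, and verifying the identity $\gamma_P\psi(b^*)=\chi_0$ that yields the stated value of the principal.
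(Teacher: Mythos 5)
Your proposal is correct and follows essentially the same route as the paper: write $X_T-Y_T^{y,b}$ as a Gaussian random variable under $\P^*$, compute its Laplace transform explicitly, minimize the resulting convex quadratic in $b$ to get $b^*$ and $\chi_0$, and then saturate the participation constraint at $y=y_0$. The extra details you supply (invoking Proposition~\ref{IC-d}, noting that the best reply is deterministic, and checking strict convexity of $\psi$ via positive-definiteness of $\Gamma$) are consistent with, and slightly more explicit than, the paper's computation.
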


\begin{proof} 
	Fix $y \geq y_0$, $b\in \R$ and $\beta_t = bK(T,t)$. Then the random variable $X_T - Y_T^{y,b}$ reads 
	\begin{align*}
X_T - Y_T^{y,b} &= g_0(T)-y   + < K_T, \left(b\Gamma^{-1} - \frac {b^2}2 \left(\gamma_A I_d + \Gamma^{-1}\right) \right)  K_T    >_{L^2}  \\
&\quad  + (1-b) \int_0^T <K(T,s),dB_s^{\beta}>
	\end{align*}
	and is therefore Gaussian under $\P^{\beta}$. So that a direct computation of the Laplace transform of a Gaussian random variable  yields 
	\begin{align*}
	\E^{\beta} \left[ U_P\left( X_T-Y_T^{y,b} \right) \right] &= - \exp\left(-\gamma_P(g_0(T)-y) + F(b) \right) 
	\end{align*}
	with 
	\begin{align*}
	F(b) = < K_T, \left(\frac{\gamma_P^2}{2} I_d -  b\left( \gamma_P^2 I_d + \gamma_P \Gamma^{-1}\right) + \gamma_P\frac {b^2}2 \left((\gamma_A + \gamma_P) I_d + \Gamma^{-1}  \right) \right)K_T   >_{L^2}.
	\end{align*}
	A direct maximization of $F$ on $b$ yields that the optimum is achieved for $b^*$ given by  \eqref{eq:betalin-d} and $F(b^*)=\chi_0$. Maximizing on $y\geq y_0$ then gives   $ V_{lin}  = V_{lin}(y_0) $. 
\end{proof}

Obviously, when the principal restricts to linear contracts,  her utility at $0$ satisfies $V_{lin}(y)  \leq V_{SB}(y) $. It follows from \eqref{eq:Vsb-explicit} and \eqref{eq:Vlin-explicit} that $\chi_0 \geq \phi_0$. More precisely, one has 
\begin{align}\label{eq:diffvalue}
V_{lin}(y_0) = V_{SB}\left( y_0 + \frac{\chi_0-\phi_0}{\gamma_P} \right) =  V_{SB}( y_0) \exp\left(\chi_0-\phi_0 \right).
\end{align}
The term $\exp\left(\phi_0-\chi_0 \right)=\frac{V_{SB}( y_0)}{V_{lin}(y_0)}$ lies in $[0,1]$ and  can be interpreted as the value of information in the following way: since in general, the principal cannot implement the optimal contract in the enlarged filtration, recall Remark~\ref{R:enlarged}, she has to restrict to sub-optimal, more simple but implementable contracts.  The price to pay when she restricts to linear contracts, is the decrease of her utility by the factor 
$\exp\left(\phi_0-\chi_0\right)$, which would correspond to the price to pay to access the optimal contract. Note that the agent utility at time $0$ is unchanged compared to the previous section, and is still equal to $\exp(\gamma_A y_0)$ by Proposition~\ref{IC-d} when  the principal proposes the contract $(y_0,b^*)$. 

\begin{remark} We note that contrary to the one dimensional setting, the coefficient $b^*$ in \eqref{eq:betalin-d} depends in general on the kernel $K$. For the case of radial costs, i.e.~$\Gamma = \kappa I_d$ for some $\kappa >0$, one recovers from \eqref{eq:betalin-d} that $b^*= (\gamma_P + {1}/{\kappa})/(\gamma_A + \gamma_P + {1}/{\kappa}) $ which is independent of $K$. Note also that in this context, $\chi_0=\phi_0$, so that the value of information vanishes in this case, that is linear contracts are optimal for the Principal's problem, recall Corollary~\ref{radial}. 
\end{remark}   

Having characterized both the fully optimal contract and the optimal linear end-of-period contract, it remains to compare the performances of the two types of contracts. We implement this comparison by studying the sensitivity of the nonnegative difference
$\chi_0-\phi_0$  with respect to the input kernel $K$ and the cost matrix $\Gamma$. The smaller the quantity $(\chi_0-\phi_0)$, the more efficient the implementation of a linear contract, recall \eqref{eq:diffvalue}. 
The next proposition provides an upper bound for the value of information in terms of two quantities: the condition number\footnote{The condition number of symmetric positive definite matrix $S$ is the ratio $\frac{\lambda_{max}}{\lambda_{min}}$ where $\lambda_{max}$ (resp. $\lambda_{min}$) is the largest (resp. smallest) eigenvalue of $S$.} of the matrix $\Gamma$, denoted $Cond(\Gamma)$ and the $L^2$-norm of the kernel $K$. The condition number $Cond(\Gamma)$ measures how sensitive is the effort cost function to changes in efforts.
\begin{pro}\label{valueinfo}
There exists a positive constant $C$ independent of the dimension $d$, kernel $K$ and terminal time $T$ such that
\begin{align}\label{eq:upperbound}
0 \leq \chi_0-\phi_0 \le C(Cond(\Gamma)-1)\int_0^T \vert \vert K(T,t) \vert\vert^2\,dt.
\end{align}
\end{pro}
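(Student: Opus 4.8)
The plan is to diagonalise the two quadratic forms defining $\phi_0$ and $\chi_0$ and reduce their difference to a one-dimensional \emph{Jensen gap}. Since $\Gamma$ is symmetric positive-definite, fix an orthonormal eigenbasis $(e_i)_{1\le i\le d}$ of $\Gamma$ with eigenvalues $\mu_1,\dots,\mu_d>0$; in this basis both $\gamma_P I_d+\Gamma^{-1}$ and $(\gamma_A+\gamma_P)I_d+\Gamma^{-1}$ are diagonal, with respective eigenvalues
\begin{align*}
a_i:=\gamma_P+\frac1{\mu_i},\qquad d_i:=\gamma_A+\gamma_P+\frac1{\mu_i}=a_i+\gamma_A .
\end{align*}
Expanding $K(T,\cdot)=\sum_{i=1}^d c_i(\cdot)e_i$ in $L^2([0,T];\R^d)$ and setting $w_i:=\int_0^T c_i(s)^2\,ds\ge0$, so that $W:=\sum_{i=1}^d w_i=\|K_T\|_{L^2}^2$, the explicit formulas for $\phi_0$ (Theorem~\ref{Main-d}) and for $\chi_0$, together with $b^\ast=(\sum_iw_ia_i)/(\sum_iw_id_i)$, should collapse to
\begin{align*}
\chi_0-\phi_0=\frac{\gamma_P}2\left(\sum_{i=1}^d w_i\frac{a_i^2}{d_i}-\frac{\bigl(\sum_{i=1}^d w_ia_i\bigr)^2}{\sum_{i=1}^d w_id_i}\right).
\end{align*}

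Next I would recognise the bracket as a Jensen gap. Writing $p_i:=w_i/W$ (a probability vector whose support lies in $\{a_1,\dots,a_d\}\subset(\gamma_P,\infty)$) and $\bar a:=\sum_ip_ia_i$, and using $d_i=a_i+\gamma_A$ hence $\sum_ip_id_i=\bar a+\gamma_A$, the bracket equals $\sum_ip_ig(a_i)-g(\bar a)$ with $g(x):=x^2/(x+\gamma_A)=x-\gamma_A+\gamma_A^2/(x+\gamma_A)$. The affine part contributes nothing to a Jensen gap, so
\begin{align*}
\chi_0-\phi_0=\frac{\gamma_P\gamma_A^2\,W}2\left(\sum_{i=1}^d p_i\,h(a_i)-h(\bar a)\right),\qquad h(x):=\frac1{x+\gamma_A},
\end{align*}
where $h$ is decreasing and convex on $(-\gamma_A,\infty)\supset(\gamma_P,\infty)$. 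Convexity of $h$ (equivalently, Cauchy--Schwarz applied to $\sum_iw_ia_i=\sum_i(\sqrt{w_i}\,a_i/\sqrt{d_i})(\sqrt{w_id_i})$) already gives $\sum_ip_ih(a_i)\ge h(\bar a)$, i.e.\ the left-hand inequality $\chi_0-\phi_0\ge0$.

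The main obstacle is the upper bound, and here the point is to \emph{resist} the usual variance / second-derivative estimate of the Jensen gap: that would produce a factor $(a_{\max}-a_{\min})^2$ whose dependence on $Cond(\Gamma)$ is quadratic and which carries an uncontrolled prefactor $1/\mu_{\max}^2$. Instead I would bound the gap by the oscillation of $h$. Set $a_{\min}:=\gamma_P+1/\mu_{\max}$, $a_{\max}:=\gamma_P+1/\mu_{\min}$, so $a_i\in[a_{\min},a_{\max}]$; since $h$ is decreasing, $\sum_ip_ih(a_i)\le h(a_{\min})$ and $h(\bar a)\ge h(a_{\max})$, whence
\begin{align*}
\sum_{i=1}^d p_i\,h(a_i)-h(\bar a)\le h(a_{\min})-h(a_{\max})=\frac{a_{\max}-a_{\min}}{(a_{\min}+\gamma_A)(a_{\max}+\gamma_A)} .
\end{align*}
Bounding $(a_{\min}+\gamma_A)(a_{\max}+\gamma_A)\ge(\gamma_A+\gamma_P)\cdot\mu_{\min}^{-1}$ and using $a_{\max}-a_{\min}=\mu_{\min}^{-1}-\mu_{\max}^{-1}$ gives $h(a_{\min})-h(a_{\max})\le(\gamma_A+\gamma_P)^{-1}\bigl(1-\mu_{\min}/\mu_{\max}\bigr)\le(\gamma_A+\gamma_P)^{-1}\bigl(Cond(\Gamma)-1\bigr)$, the last step because $Cond(\Gamma)=\mu_{\max}/\mu_{\min}\ge1$ forces $1-1/Cond(\Gamma)\le Cond(\Gamma)-1$. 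Substituting back yields \eqref{eq:upperbound} with the explicit constant $C=\gamma_P\gamma_A^2/\bigl(2(\gamma_A+\gamma_P)\bigr)$, which depends only on the risk-aversion parameters and not on $d$, $K$ or $T$. Everything besides this reduction and the choice of the oscillation estimate — the diagonalisation, the algebraic collapse, the final substitution — is routine.
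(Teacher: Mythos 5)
Your proof is correct, and its first half --- diagonalising $\Gamma$, expanding $K(T,\cdot)$ in the eigenbasis, and collapsing $\chi_0-\phi_0$ to $\tfrac{\gamma_P}{2}\bigl(\sum_i w_i a_i^2/d_i-(\sum_i w_i a_i)^2/\sum_i w_i d_i\bigr)$ --- is exactly the paper's identity \eqref{vi_app} (the paper silently drops the prefactor $\gamma_P/2$ there, which is harmless since it is absorbed into $C$). Where you genuinely diverge is in estimating this scalar expression. The paper bounds the two competing terms separately by their worst cases, $\sum_i w_i\eta_i^2/(\gamma_A+\eta_i)\le W\,\eta_{\max}^2/(\gamma_A+\eta_{\min})$ and $(\sum_i w_i\eta_i)^2/\sum_i w_i(\gamma_A+\eta_i)\ge W\,\eta_{\min}^2/(\gamma_A+\eta_{\max})$, and then factors the resulting difference via $a^3-b^3=(a-b)(a^2+ab+b^2)$ to extract $\eta_{\max}-\eta_{\min}=\lambda_{\max}^{-1}(Cond(\Gamma)-1)$; the price is a constant that still depends on $\Gamma$ through $\eta_{\min},\eta_{\max}$ (which the statement permits, as it only requires independence of $d$, $K$ and $T$). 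You instead peel off the affine part of $g(x)=x^2/(x+\gamma_A)$ so that only the convex remainder $\gamma_A^2/(x+\gamma_A)$ contributes, identify the expression as a Jensen gap, and bound it by the oscillation of $h$ over $[a_{\min},a_{\max}]$. This buys two things: the nonnegativity $\chi_0\ge\phi_0$ comes for free from convexity of $h$ rather than from a separate Cauchy--Schwarz argument, and the final constant $C=\gamma_P\gamma_A^2/\bigl(2(\gamma_A+\gamma_P)\bigr)$ is explicit and uniform in $\Gamma$ as well --- a strictly stronger conclusion than the paper's. All your intermediate inequalities ($\sum_i p_i h(a_i)\le h(a_{\min})$, $h(\bar a)\ge h(a_{\max})$, $(\gamma_A+a_{\min})(\gamma_A+a_{\max})\ge(\gamma_A+\gamma_P)/\mu_{\min}$, and $1-1/Cond(\Gamma)\le Cond(\Gamma)-1$) check out.
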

\begin{proof}
		See Appendix \ref{MOP}. 
\end{proof}

When the cost is radial, then $Cond(\Gamma)=1$ so that one recovers $\chi_0 = \phi_0$, meaning that linear contracts are optimal,  recall Corollary~\ref{radial}. When $Cond(\Gamma)$ is close to one, it means that the agent's best reply in terms of effort, solution to the linear equation $\Gamma a^*=\beta$, is not very sensitive to errors in the principal control $\beta$. In that case, it is noticeable that the optimal linear contract is nearly optimal regardless of the Volterra process that drives the output.\\
For convolution kernels of the form $K(T,t)=\ind_{t<T} k(T-t)$, $\int_0^T \|K(T,t)\|^2 dt = \int_0^T \|k(t)\|^2dt $, so that the upper bound in \eqref{eq:upperbound} shrinks as the horizon of the contract $T$ decreases, suggesting that linear contract seem more performant for short-term relationships compared to long-term relationships. Furthermore, for the exponential kernel $k(t)=e^{-\lambda t}$ with $\lambda \in \R$, we have $\int_0^T \|K(T,t)\|^2 dt = (1-e^{-2\lambda T})/2\lambda$, thus the higher is the mean-reverting intensity, the smaller the upper bound.  For the fractional kernel $k(t)=\sqrt{2H}t^{H-1/2}$ with $H\in (0,1)$, we have  $\int_0^T \|K(T,t)\|^2 dt = T^{2H} $.

We now illustrate numerically the value of information $\exp(\phi_0-\chi_0)$ using Equation \eqref{vi_app} for exponential and fractional kernels with $d=2$ and with a diagonal matrix for the cost efforts $\Gamma = \mbox{diag}(\lambda_1, \lambda_2)$. First, we look at the case of two  exponential kernels $k_i(t)=e^{-\rho_i t}$ with different mean reversions\footnote{Note the change in notation to avoid confusion with the eigenvalues of $\Gamma$.} $\rho_i \in \R$, $i=1,2$.

Figure \ref{fig:exp} describes a situation where providing one unit of effort for Task 1 is more costly than for Task 2, $\lambda_1 > \lambda_2$, and when the mean-reverting parameters $\rho_i$ vary. The figure shows that linear contracts are  more performant for negative and smaller mean reversions, which is the case that is usually of interest in practice.  When the intensity of the most expensive task is fixed and positive, the variation of the intensity parameter of the least expensive task has very little effect on the value of the information. More generally, the value of information increases when $\rho_2$ increases. The linear contracts are very efficient (more than 90\%) when the mean-reverting parameter of the most expensive task is one.

\begin{center}
		\includegraphics[scale=.4]{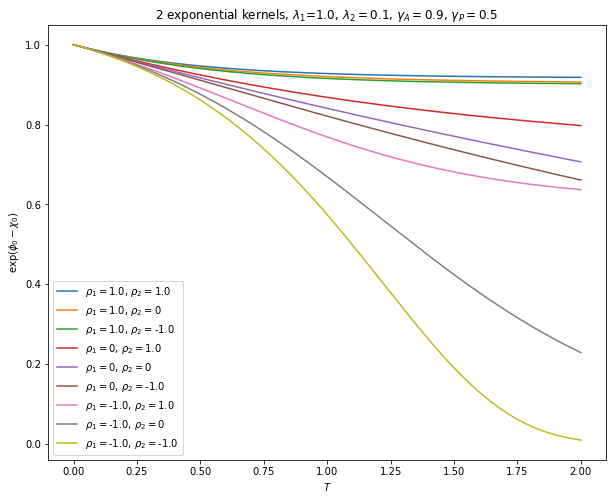}
		\captionof{figure}{Impact of the mean reversion parameters  $\rho_1$ and $\rho_2$ on the value of information with respect to the terminal time $T$.}
		\label{fig:exp}
\end{center}

For our second example, we consider two fractional kernels $k_i(t)=\sqrt{2H_i}t^{H_i-1/2}$ with $H_i\in (0,1)$, we see on Figure~\ref{fig:frac} that for short maturities linear contracts are more performant for  Hurst indices larger than $1/2$ (long-memory processes), while for larger maturities they are more performant for values of $H<1/2$, (short memory processes). The inflexion point at $T=1$ is explained by the behavior of the variance of the fractional Brownian motion that reads $\int_0^T \|K(T,t)\|^2 dt = T^{2H} $, see Figure~\ref{fig:L2frac}.

\begin{center}
	\includegraphics[scale=.4]{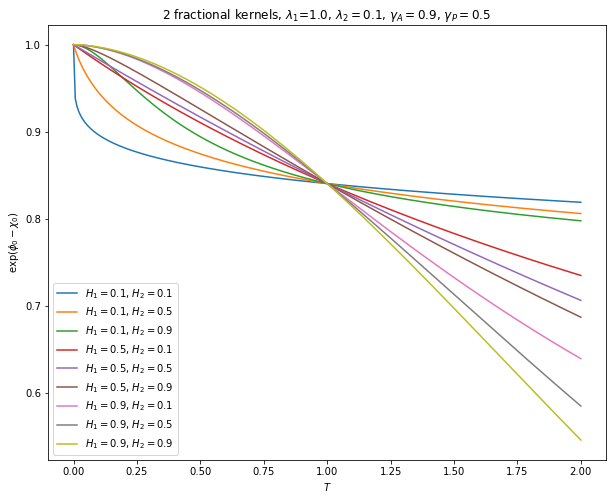}
	\captionof{figure}{Impact of the Hurst indices $H_1$ and $H_2$ on the value of information with respect to the terminal time $T$.}
	\label{fig:frac}
\end{center}

	\begin{center}
		\includegraphics[scale=.6]{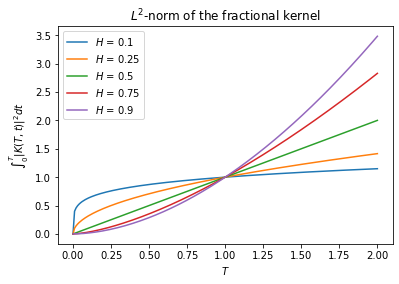}
		\captionof{figure}{Impact of the Hurst index $H$ on $L^2$-norm of the fractional kernel $t\mapsto\sqrt{2H}t^{H-1/2}$.}
\label{fig:L2frac}
	\end{center}

\section{Conclusion}

 The principal-agent framework generally leads to complex optimal contracts that do not align with real-world practices.  Also, the proposal of a theoretical framework justifying the signing of simple (linear) optimal contracts
  deserves attention. In this paper, we have shown that the remarkable results of the Holmstrom and Milgrom model extend surprisingly to a large class of Gaussian models that exhibit memory: the Volterra processes. In particular, we prove that optimal contracts are linear in one-dimensional models and that the principal has no incentives to expand its information set. In multi-dimensional models, this is no longer generally the case except when the effort cost function is radial. Nevertheless, we are able measure the utility gap when the principal proposes a linear contract in the case of a general effort cost function. Thus, we can examine the key features that make the performance of linear contracts very close to optimality.

\bibliography{bibl}

\begin{thebibliography}{}

\bibitem[\protect\astroncite{Abi~Jaber et~al.}{2019}]{AJLP17}
Abi~Jaber, E., Larsson, M., and Pulido, S. (2019).
\newblock Affine {V}olterra processes.
\newblock {\em The Annals of Applied Probability}, 29(5):3155--3200.

\bibitem[\protect\astroncite{Beran}{1994}]{BeranBook}
Beran, J. (1994).
\newblock Statistics for long-memory processes, volume 61 of.
\newblock {\em Monographs on Statistics and Applied Probability}, pages 38--72.

\bibitem[\protect\astroncite{Biais et~al.}{2007}]{biais2007dynamic}
Biais, B., Mariotti, T., Plantin, G., and Rochet, J.-C. (2007).
\newblock Dynamic security design: Convergence to continuous time and asset
  pricing implications.
\newblock {\em The Review of Economic Studies}, 74(2):345--390.

\bibitem[\protect\astroncite{Biais et~al.}{2010}]{biais2010large}
Biais, B., Mariotti, T., Rochet, J.-C., and Villeneuve, S. (2010).
\newblock Large risks, limited liability, and dynamic moral hazard.
\newblock {\em Econometrica}, 78(1):73--118.

\bibitem[\protect\astroncite{Bolton and Dewatripont}{2005}]{bolton2005contract}
Bolton, P. and Dewatripont, M. (2005).
\newblock {\em Contract theory}.
\newblock MIT press.

\bibitem[\protect\astroncite{Carroll}{2015}]{carroll2015robustness}
Carroll, G. (2015).
\newblock Robustness and linear contracts.
\newblock {\em American Economic Review}, 105(2):536--63.

\bibitem[\protect\astroncite{Cisternas}{2018}]{cisternas18}
Cisternas, G. (2018).
\newblock Carrer concerns and the nature of skills.
\newblock {\em American Economic Journal: Microeconomics}, 10(2):152--189.

\bibitem[\protect\astroncite{Clementi and Hopenhayn}{2006}]{clementi2006theory}
Clementi, G.~L. and Hopenhayn, H.~A. (2006).
\newblock A theory of financing constraints and firm dynamics.
\newblock {\em The Quarterly Journal of Economics}, 121(1):229--265.

\bibitem[\protect\astroncite{Cvitani{\'c} et~al.}{2018}]{cvitanic2018dynamic}
Cvitani{\'c}, J., Possama{\"\i}, D., and Touzi, N. (2018).
\newblock Dynamic programming approach to principal--agent problems.
\newblock {\em Finance and Stochastics}, 22(1):1--37.

\bibitem[\protect\astroncite{Decreusefond and
  Ustunel}{1999}]{decreusefond1999stochastic}
Decreusefond, L. and Ustunel, A.~S. (1999).
\newblock Stochastic analysis of the fractional {B}rownian motion.
\newblock {\em Potential analysis}, 10(2):177--214.

\bibitem[\protect\astroncite{DeMarzo and Fishman}{2007}]{demarzo2007optimal}
DeMarzo, P.~M. and Fishman, M.~J. (2007).
\newblock Optimal long-term financial contracting.
\newblock {\em The Review of Financial Studies}, 20(6):2079--2128.

\bibitem[\protect\astroncite{DeMarzo et~al.}{2012}]{demarzo2012dynamic}
DeMarzo, P.~M., Fishman, M.~J., He, Z., and Wang, N. (2012).
\newblock Dynamic agency and the q theory of investment.
\newblock {\em The Journal of Finance}, 67(6):2295--2340.

\bibitem[\protect\astroncite{DeMarzo and Sannikov}{2006}]{demarzo2006optimal}
DeMarzo, P.~M. and Sannikov, Y. (2006).
\newblock Optimal security design and dynamic capital structure in a
  continuous-time agency model.
\newblock {\em The Journal of Finance}, 61(6):2681--2724.

\bibitem[\protect\astroncite{Edmans and Gabaix}{2011}]{edmans2011tractability}
Edmans, A. and Gabaix, X. (2011).
\newblock Tractability in incentive contracting.
\newblock {\em The Review of Financial Studies}, 24(9):2865--2894.

\bibitem[\protect\astroncite{Edmans et~al.}{2012}]{edmans2012dynamic}
Edmans, A., Gabaix, X., Sadzik, T., and Sannikov, Y. (2012).
\newblock Dynamic ceo compensation.
\newblock {\em The Journal of Finance}, 67(5):1603--1647.

\bibitem[\protect\astroncite{Gatheral et~al.}{2018}]{gatheral2018volatility}
Gatheral, J., Jaisson, T., and Rosenbaum, M. (2018).
\newblock Volatility is rough.
\newblock {\em Quantitative Finance}, 18(6):933--949.

\bibitem[\protect\astroncite{Green}{1987}]{Green1987}
Green, E. (1987).
\newblock Lending and the smoothing of uninsurable income.
\newblock In Prescott, E. and Wallace, N., editors, {\em Contractual
  arrangements for intertemporal trade}, volume 1 of Minnesota studies in
  macroeconomics, pages 3--25. University of Minnesota Press, Minneapolis.

\bibitem[\protect\astroncite{Gripenberg et~al.}{1990}]{GLS:90}
Gripenberg, G., Londen, S.-O., and Staffans, O. (1990).
\newblock {\em Volterra integral and functional equations}, volume~34 of {\em
  Encyclopedia of Mathematics and its Applications}.
\newblock Cambridge University Press, Cambridge.

\bibitem[\protect\astroncite{Hellwig and Schmidt}{2002}]{hellwig2002discrete}
Hellwig, M.~F. and Schmidt, K.~M. (2002).
\newblock Discrete-time approximations of the holmstr{\"o}m-milgrom
  brownian-motion model of intertemporal incentive provision.
\newblock {\em Econometrica}, 70(6):2225--2264.

\bibitem[\protect\astroncite{Holmstrom and
  Milgrom}{1987}]{holmstrom1987aggregation}
Holmstrom, B. and Milgrom, P. (1987).
\newblock Aggregation and linearity in the provision of intertemporal
  incentives.
\newblock {\em Econometrica: Journal of the Econometric Society}, pages
  303--328.

\bibitem[\protect\astroncite{Hu et~al.}{2005}]{hu2005utility}
Hu, Y., Imkeller, P., and M{\"u}ller, M. (2005).
\newblock Utility maximization in incomplete markets.
\newblock {\em The Annals of Applied Probability}, 15(3):1691--1712.

\bibitem[\protect\astroncite{Karatzas and Shreve}{1991}]{karatzasshreve91}
Karatzas, I. and Shreve, S. (1991).
\newblock {\em Brownian motion and stochastic calculus}, volume 113.
\newblock Springer.

\bibitem[\protect\astroncite{Kolmogorov}{1940}]{kolmogorov1940wienersche}
Kolmogorov, A.~N. (1940).
\newblock Wienersche spiralen und einige andere interessante kurven in
  hilbertscen raum, cr (doklady).
\newblock {\em Acad. Sci. URSS (NS)}, 26:115--118.

\bibitem[\protect\astroncite{Laffont and Martimort}{2009}]{laffont2009theory}
Laffont, J.-J. and Martimort, D. (2009).
\newblock {\em The theory of incentives}.
\newblock Princeton university press.

\bibitem[\protect\astroncite{Mandelbrot and
  Van~Ness}{1968}]{mandelbrot1968fractional}
Mandelbrot, B.~B. and Van~Ness, J.~W. (1968).
\newblock Fractional brownian motions, fractional noises and applications.
\newblock {\em SIAM review}, 10(4):422--437.

\bibitem[\protect\astroncite{Rogerson}{1985}]{rogerson1985repeated}
Rogerson, W.~P. (1985).
\newblock Repeated moral hazard.
\newblock {\em Econometrica: Journal of the Econometric Society}, pages 69--76.

\bibitem[\protect\astroncite{Sannikov}{2008}]{sannikov2008continuous}
Sannikov, Y. (2008).
\newblock A continuous-time version of the principal-agent problem.
\newblock {\em The Review of Economic Studies}, 75(3):957--984.

\bibitem[\protect\astroncite{Sch{\"a}ttler and Sung}{1993}]{schattler1993first}
Sch{\"a}ttler, H. and Sung, J. (1993).
\newblock The first-order approach to the continuous-time principal-agent
  problem with exponential utility.
\newblock {\em Journal of Economic Theory}, 61(2):331--371.

\bibitem[\protect\astroncite{Spear and Srivastava}{1987}]{spear1987repeated}
Spear, S.~E. and Srivastava, S. (1987).
\newblock On repeated moral hazard with discounting.
\newblock {\em The Review of Economic Studies}, 54(4):599--617.

\bibitem[\protect\astroncite{Sung}{1995}]{sung1995linearity}
Sung, J. (1995).
\newblock Linearity with project selection and controllable diffusion rate in
  continuous-time principal-agent problems.
\newblock {\em The RAND Journal of Economics}, pages 720--743.

\bibitem[\protect\astroncite{Thomas and Worrall}{1990}]{thomas1990income}
Thomas, J. and Worrall, T. (1990).
\newblock Income fluctuation and asymmetric information: An example of a
  repeated principal-agent problem.
\newblock {\em Journal of Economic Theory}, 51(2):367--390.

\bibitem[\protect\astroncite{Williams}{2011}]{williams2011ECMA}
Williams, N. (2011).
\newblock Persistent private information.
\newblock {\em Econometrica: Journal of the Econometric Society},
  79(4):1233--1275.

\end{thebibliography}
\bibliographystyle{apa}
\addcontentsline{toc}{section}{References}
\bibliographystyle{plain}

\section{Appendix}

\subsection{Proofs} \label{MOP}

\begin{proof} [Proof of Lemma~\ref{martingaleoptimality}]
Let us consider any admissible effort policy $a$ in $\mathcal{A}$. Conditions i)-iv) immediately imply that
\begin{align*}
&\E^a\left[U_A\left(\xi-\int_0^{T} k(a_s) ds\right)\right]\\
&\overset{i)}{=}\E^a[R_T^a] \\
&\overset{ii)}{\leq} R_0^a \\
&\overset{iii)}{=} R_0^{a^\ast} \\
&\overset{iv)}{=} \E^{a^\ast}[R_T^{a^\ast}] \\
&\overset{i)}{=} \E^{a^\ast}\left[U_A\left(\xi-\int_0^{T} k(a_s^\ast) ds\right)\right]
\end{align*}
which concludes the proof. 
 \end{proof} 
 
\begin{proof}[Proof of Lemma~\ref{L:M}]
Denoting by 	$  U^{\beta}_t = -\gamma_P \left( g^{\beta}_t(T) - Y_t^{\beta}  \right) 	+ \phi_t,  $
an application of It\^o's formula yields 
$$ d M_t^{\beta} =  M_t^{\beta} \left(dU_t^{\beta}  + \frac 1 2 d\langle U^{\beta}\rangle_t\right).$$
Using \eqref{eq:dynamicsg}, we obtain that
$$ dU_t^{\beta} =  \left(\dot{\phi}_t -\gamma_P K(T,t) \kappa\beta_t  + \gamma_P\frac{\gamma_A+1/ \kappa}{2}   \beta_t^2 \right)dt +   \left(\gamma_P \beta_t -\gamma_P K(T,t) \right)    dB^{\beta}_t, \quad \mathbb P^{\beta}-a.s.$$
so that 
	\begin{align}
\frac{dM_t^{\beta}}{M_t^{\beta}} &=   \left(\dot{\phi}_t  + \frac 1 2 \gamma_P^2 K(T,t)^2    + \frac{\gamma_P} 2 (\gamma_A  +\gamma_P+1/ \kappa) \beta_t^2  - \gamma_P (\gamma_P + 1/ \kappa) K(T,t)\beta_t  \right)dt  \\
&\quad +  \left(\gamma_P \beta_t -\gamma_P K(T,t) \right)    dB^{\beta}_t , \quad \mathbb P^{\beta}-a.s.  
\end{align}
Completing the squares in $\beta$ yields 
\begin{align*}
 \frac{\gamma_P} 2 (\gamma_A  +\gamma_P+1/ \kappa) \beta_t^2  - \gamma_P (\gamma_P + 1/ \kappa) K(T,t)\beta_t 
 &=  \frac{\gamma_P} 2 (\gamma_A^2  +\gamma_P+1/ \kappa) \left(\beta_t - \beta^*_t \right)^2   \\
 &\quad \quad \quad  - \frac{\gamma_P} 2  \frac{(\gamma_P + 1/ \kappa)^2}{(\gamma_A  +\gamma_P+1/\kappa)} K(T,t)^2 ,
\end{align*}
with $\beta^*$ given by \eqref{eq:Maineffort}. Combining the above and using that 
$$  \dot{\phi}_t = \frac{\gamma_P} 2 \left(  \frac{(\gamma_P + 1/ \kappa)^2}{(\gamma_A +\gamma_P+1/ \kappa)} -   \gamma_P^2  \right) K(T,t)^2 $$
yields \eqref{eq:Mdynamics}. 
\end{proof}

\begin{proof} [Proof of Theorem~\ref{Main-d}] The incentive-compatible contract parametrized by $(y,\beta)$  takes the form
	$$ dY_t^{y,\beta} =  \frac{1}{2} <\beta_t, D \beta_t> dt + <\beta_t,  dB_t^{\beta}>, \quad Y_0^{\beta} = y,$$
	with $D:=\gamma_A I_d + \Gamma^{-1}$.
	Under the probability $\P^\beta$, the output process evolves as
	$$ X_t^{\beta} = g_0(t) + \int_0^t <K(t,s), \Gamma^{-1}\beta_s> ds + \int_0^t <K(t,s), dB_s^{\beta}>.$$
	For $ s\geq t$, define the effort-corrected forward output by
	\begin{align*}
	g_t^{\beta}(s) &= \E^{\beta}\left[ X_s^{\beta} - \int_t^s <K(s,u),\Gamma^{-1}\beta_u> du \mid \mathcal F_t\right]\\
	&= g_0(s) + \int_0^t <K(s,u), \Gamma^{-1}\beta_u> du + \int_0^t <K(s,u), dB_u^{\beta}>.
	\end{align*}
	Note that for fixed $s \leq T$, $t\mapsto g^{\beta}_t(s)$ is a semimartingale on  $[0,s)$ with dynamics 
	\begin{align}\label{eq:dynamicsgdd}
	dg^{\beta}_t(s) = <K(s,t),\Gamma^{-1}\beta_t> dt + <K(s,t),dB_t^{\beta}>.
	\end{align}
	To mimic the proof of the one-dimensional case, we will apply the martingale optimality principle to the process
	\begin{align*} M_t^{\beta} = \exp\left(-\gamma_P \left( g^{\beta}_t(T) - Y_t	^{\beta} \right)  + \phi_t   \right),
	\end{align*} 
	with 
	\begin{align}\label{eq:phidd}
	\phi_t  =  \frac{\gamma_P} 2  \int_t^T < K(T,s),    \left( \gamma_P I_d   - \left(\gamma_PI_d  + \Gamma^{-1}\right) \left(\gamma_PI_d  + D\right)^{-1}  \left(\gamma_PI_d  + \Gamma^{-1}\right)  \right)     K(T,s)>  ds .
	\end{align}
	Denoting by 	$$  U^{\beta}_t = -\gamma_P \left( g^{\beta}_t(T) - Y_t^{\beta}  \right) 	+ \phi_t ,  $$
	an application of It\^o's formula yields 
	$$ d M_t^{\beta} =  M_t^{\beta} \left(dU_t^{\beta}  + \frac 1 2 d\langle U^{\beta}\rangle_t\right).$$
	Using \eqref{eq:dynamicsgdd}, we obtain that
	\begin{align*}
	dU_t^{\beta} &=  \left(\dot{\phi}_t -\gamma_P <K(T,t), \Gamma^{-1}\beta_t>  + \frac{\gamma_P} {2} <\beta_t,D\beta_t>  \right)dt \\
	&\quad +  \gamma_P <\beta_t  - K(T,t),   dB^{\beta}_t>, \quad \mathbb P^{\beta}-a.s.
	\end{align*} 
	so that 
	\begin{align}
	\frac{dM_t^{\beta}}{M_t^{\beta}} &=   \bigg[\dot{\phi}_t  + \frac 1 2 \gamma_P^2 \|K(T,t)\|^2 + \frac{\gamma_P} {2} <\beta_t, \left(\gamma_P  I_d +  D\right) \beta_t>   \\
	&\quad \quad - \gamma_P  <K(T,t), 	(\gamma_PI_d  + \Gamma^{-1})\beta_t>    \bigg]dt  \\
	&\quad +  \gamma_P <\beta_t  - K(T,t)  , dB^{\beta}_t>, \quad \mathbb P^{\beta}-a.s.  
	\end{align}
	Completing the squares in $\beta$ yields that 
	\begin{align*}
\frac{\gamma_P} {2} <\beta_t, \left(\gamma_P  I_d +  D\right) \beta_t>   - \gamma_P  <K(T,t), 	(\gamma_PI_d  + \Gamma^{-1})\beta_t>  
	\end{align*}
	is equal to 
	\begin{align*}
&\frac{\gamma_P} {2} <\left(\beta_t- \beta_t^*\right), \left( \gamma_P I_d + D \right)  \left(\beta_t- \beta_t^*\right)> \\
&\quad \quad \quad -  \frac{\gamma_P} {2}  <K(T,t), \left(\gamma_P I_d + \Gamma^{-1}\right) \left(\gamma_P I_d + D\right)^{-1}\left(\gamma_P I_d + \Gamma^{-1}\right) K(T,t)>	
	\end{align*}
	with $\beta^*$ given by \eqref{eq:betavsb-d}. 
	Combining the above computations and using \eqref{eq:phidd}
	yield 
	\begin{align}\label{eq:Mdynamicsd}
	\frac{dM_t^{\beta}}{M_t^{\beta}} &=\frac{\gamma_P} {2} <\left(\beta_t- \beta_t^*\right), \left( \gamma_P I_d + D \right)  \left(\beta_t- \beta_t^*\right)> dt \\
	&\quad +  \gamma_P < \left(\beta_t  -K(T,t) \right), dB_t^{\beta}>, \quad \mathbb P^{\beta}-a.s.  
	\end{align}
	We then conclude the proof by proceeding analogously to the proof of Theorem~\ref{Main} in Section~\ref{sectionP} in the one-dimensional case by applying the Martingale optimality principle on $M^{\beta}$.
\end{proof}

\begin{proof} [Proof of Proposition~\ref{valueinfo}]
Let us define the symmetric positive definite matrix $$A=\gamma_P I_d +\Gamma^{-1}.$$
 The matrix A is diagonalizable and thus, there exists an orthogonal matrix $P$ such that 
 $$^tPAP=D=diag(\eta_1,\ldots,\eta_d)$$ with
$\eta_i=\gamma_P+\frac{1}{\lambda_i}$ where $\lambda_1,\ldots,\lambda_d$ are the eigenvalues of $\Gamma$.
We have
\begin{align}
\chi_0-\phi_0&=  <K_T,A(\gamma_A I_d+A)^{-1}AK>-\frac{<K_T,AK_T>^2}{<K_T,(\gamma_AI_d+A)K_T>}.
\end{align}
Setting $K_T(t)=P\hat K_T(t)$ for every $t \le T$ and denoting by $\hat k_i(t)$ the components of the vector $\hat K_t(t)$, we obtain 

\begin{align*}
\chi_0-\phi_0 =<\hat K_T(t),D(\gamma_A I_d+D)^{-1}D \hat K_T(t)>-\frac{<\hat K_T(t),D \hat K_T(t)>^2}{<\hat K_T(t),(\gamma_AI_d+D)\hat K_T(t)>},
\end{align*}
or equivalently
\begin{align}\label{vi_app}
\chi_0-\phi_0 =\sum_{i=1}^d \frac{\eta_i^2}{\gamma_A+\eta_i} \int_0^T \hat k_i(t)^2 dt  -  \frac{\left( \sum_{i=1}^d \eta_i \int_0^T \hat k_i(t)^2 dt  \right)^2 }{\sum_{i=1}^d(\gamma_A+\eta_i) \int_0^T \hat k_i(t)^2 dt }.
\end{align}

Then, observing that 
$$ \sum_{i=1}^d  \int_0^T \hat k_i(t)^2 dt = \int_0^T  \|^tP K(T,t)\|^2 dt  = \int_0^T \|K(T,t)\|^2 dt,  $$ 
since $P$ is orthogonal,  we deduce that 
\begin{align*}
\chi_0-\phi_0 \le  \left( \frac{\eta_{max}^2}{\gamma_A+\eta_{min}}-\frac{\eta_{min}^2}{\gamma_A+\eta_{max}} \right) \int_0^T  \|K(T,t)\|^2 dt 
\end{align*}
Rearranging terms and using $a^3-b^3=(a-b)(a^2+ab+b^2)$, we finally obtain
$$
\chi_0-\phi_0 \le C(\eta_{max}-\eta_{min}) \int_0^T  \|K(T,t)\|^2 dt ,
$$
from which we deduce the result by noting that
$$
\eta_{max}-\eta_{min}=\frac{1}{\lambda_{max}}(Cond(\Gamma)-1).
$$

\end{proof}

\subsection{Stochastic linear integro-differential convolution equations} \label{Integro}

In this section, we consider the stochastic linear integro-differential convolution equations of the form 
\begin{align}\label{eq:stochastic integro}
dX_t = \left( h(t) + \int_{[0,t]} \mu(ds) X_{t-s} \right) dt + \sigma dB_t , 
\end{align}
with initial condition $X_0 \in \R^d$, where $h:[0,T] \to \R^{d}$, $\mu:\mathcal B([0,T]) \to \R^{d\times d}$ of bounded variation, $X_0 \in \R^d$ and $\sigma \in \R^{d\times d}$. We show that \eqref{eq:stochastic integro} admits a unique solution given by the Volterra Gaussian process 
\begin{align}\label{eq:volterra}
X_t = g_0(t) + \int_0^t K(t,s) dB_s 
\end{align} 
for some specific choice of input curve $g_0:[0,T]\to \R^d$ and convolution kernel $K:[0,T]^2 \to \R^{d\times d}$.

\begin{example}
	Setting $\mu(dt) = \sum_{k=1}^m a_k \delta_{t_k} $, we recover equations with delay.	
\end{example}

For a Lebesgue measurable matrix-valued function $f$ and a matrix-valued measure $\mu$ of locally bounded variation  we define the convolutions $f*\mu$ and $\mu *f $ by 
$$ (f \ast \mu)(t) = \int_{[0,t]}  f(t-s)\mu(ds), \quad (\mu \ast f)(t) = \int_{[0,t]} \mu(ds) f(t-s).$$
We note that  $f*\mu = \mu*f$ when $d=1$.

We need the notion of the {differential resolvent} of $\mu$. For any bounded measure with locally finite variation $\mu$, the {differential resolvent} $R:[0,T]\to \R^{d\times d}$ is the unique locally absolutely continuous function $R$ such that 
\begin{align}\label{eq:resolventeq}
R'= \mu*R = R*\mu, \quad R(0)=I_d. 
\end{align} 
We refer to \cite[Theorem 3.1]{GLS:90} for the existence and uniqueness statement regarding $R$. One observes  that $R$ is continuous and therefore bounded on $[0,T]$ so that the stochastic convolution $\int_0^{\cdot} R(\cdot-s)\sigma dB_s$ is well-dedined on $[0,T]$.

\begin{theorem}
	Let $h:[0,T] \to \R^{d}$, $\mu:\mathcal B([0,T]) \to \R^{d\times d}$ of bounded variation and $X_0 \in \R^d$. The stochastic linear integrodifferential equation \eqref{eq:stochastic integro} admits a unique (continuous) solution on $[0,T]$ given by  
	\begin{align}\label{eq:sol}
	X_t = g_0(t) + \int_0^tR(t-s) \sigma dB_s, 
	\end{align}
	with 
	$$ g_0(t) = R(t)X_0 + \int_0^t R(t-s)h(s)ds,$$
	and $R$ the differential resolvent of $\mu$.
\end{theorem}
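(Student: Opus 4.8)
The plan is to obtain existence by directly verifying that the process in \eqref{eq:sol} solves the equation, and uniqueness by a pathwise Gronwall argument. First I would rewrite \eqref{eq:stochastic integro} in integrated form,
\begin{align}
X_t = X_0 + \int_0^t h(s)\,ds + \int_0^t (\mu \ast X)(s)\,ds + \sigma B_t, \qquad t \le T,
\end{align}
with $(\mu \ast X)(s) = \int_{[0,s]} \mu(du)\, X_{s-u}$, and recall from \cite[Theorem 3.1]{GLS:90} that the differential resolvent $R$ of $\mu$ exists, is locally absolutely continuous, satisfies $R' = \mu \ast R = R \ast \mu$ and $R(0) = I_d$ as in \eqref{eq:resolventeq}, and is in particular continuous, hence bounded, on $[0,T]$; this boundedness is what makes the stochastic convolution $\int_0^\cdot R(\cdot - s)\,\sigma\,dB_s$ well defined with continuous sample paths. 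The candidate solution is then the Gaussian Volterra process $X_t = g_0(t) + \int_0^t R(t-s)\,\sigma\,dB_s$ with $g_0 = R(\cdot)X_0 + R \ast h$, i.e. with kernel $K(t,s) = R(t-s)\,\sigma\,\ind_{s<t}$, so that \eqref{eq:stochastic integro} indeed falls inside the class \eqref{eq:Xintro}.

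For the verification step, I would substitute this candidate into the right-hand side and compute $\int_0^t (\mu \ast X)(s)\,ds$. Splitting $X = g_0 + Z$ with $Z_u = \int_0^u R(u-r)\,\sigma\,dB_r$, associativity of convolution together with the resolvent identity $\mu \ast R = R'$ gives $\mu \ast g_0 = R'(\cdot)X_0 + R' \ast h$; integrating and using $\int_0^t R'(u)\,du = R(t) - I_d$ and the deterministic Fubini identity $\int_0^t (R' \ast h)(u)\,du = (R \ast h)(t) - \int_0^t h(s)\,ds$ yields
\begin{align}
\int_0^t (\mu \ast g_0)(s)\,ds = \big(R(t) - I_d\big)X_0 + (R \ast h)(t) - \int_0^t h(s)\,ds.
\end{align}
For the stochastic term, a stochastic Fubini theorem lets me interchange $\mu(du)$ and $dB_r$, giving $(\mu \ast Z)(u) = \int_0^u (\mu \ast R)(u-r)\,\sigma\,dB_r = \int_0^u R'(u-r)\,\sigma\,dB_r$; a second application, swapping the Lebesgue integration in $s$ with $dB_r$, together with $\int_r^t R'(s-r)\,ds = R(t-r) - I_d$, gives
\begin{align}
\int_0^t (\mu \ast Z)(s)\,ds = \int_0^t R(t-r)\,\sigma\,dB_r - \sigma B_t.
\end{align}
Adding the two contributions and inserting them into the integrated equation, the two copies of $\int_0^t h$ cancel, the two copies of $\sigma B_t$ cancel, and $X_0 - I_d X_0 = 0$, leaving exactly $R(t)X_0 + (R \ast h)(t) + \int_0^t R(t-r)\,\sigma\,dB_r = X_t$; continuity of the paths follows from continuity of $g_0$ and of the stochastic convolution with bounded continuous kernel.

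For uniqueness, if $X$ and $\tilde X$ are two continuous solutions on $[0,T]$, their difference $D := X - \tilde X$ solves the pathwise deterministic Volterra equation $D_t = \int_0^t (\mu \ast D)(s)\,ds$, so that $\|D_t\| \le |\mu|\big([0,T]\big) \int_0^t \sup_{r \le s}\|D_r\|\,ds$, where $|\mu|$ denotes the total variation measure of $\mu$; since $t \mapsto \sup_{r \le t}\|D_r\|$ is nondecreasing and finite on the compact $[0,T]$ by continuity of the paths, Gronwall's lemma forces $D \equiv 0$ almost surely. The step I expect to be the main obstacle is the rigorous justification of the two stochastic Fubini interchanges --- swapping the measure integration $\int_{[0,\cdot]}\mu(du)$, and then the Lebesgue integration in $s$, with the stochastic integration against $\sigma\,dB_r$. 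This follows from a classical stochastic Fubini theorem once one checks the integrability condition $\int_0^T \big(\int_{[0,s]} |\mu|(du)\,\|R(s-u)\sigma\|\big)^2\,ds < \infty$, which holds because $\sup_{[0,T]}\|R\| < \infty$, $\sigma$ is constant and $|\mu|([0,T]) < \infty$; the remaining manipulations are bookkeeping with convolutions and the identity $R' = \mu \ast R = R \ast \mu$.
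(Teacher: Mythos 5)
Your proof is correct, and the existence half is essentially the paper's argument: both verifications rest on the resolvent identity $R' = \mu\ast R = R\ast\mu$, associativity of convolution, and two applications of stochastic Fubini (justified exactly as you say, by boundedness of $R$ on $[0,T]$ and finiteness of $|\mu|([0,T])$); the paper merely organizes the computation in the opposite direction, expanding $R = I_d + 1\ast(\mu\ast R)$ inside the formula for $X$ and reading off the semimartingale decomposition, whereas you substitute the candidate into the integrated equation and watch the terms cancel. Where you genuinely diverge is uniqueness. The paper convolves the integrated equation with $R$, uses stochastic Fubini and the resolvent equation to cancel $R\ast X$ on both sides, and thereby shows that \emph{any} solution must equal the explicit representation \eqref{eq:sol}. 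You instead take the difference $D$ of two continuous solutions, observe that the Brownian and $h$ terms cancel so that $D$ satisfies the pathwise deterministic equation $D_t = \int_0^t(\mu\ast D)(s)\,ds$, and close with Gronwall applied to $t\mapsto\sup_{r\le t}\|D_r\|$. Your route is more elementary --- it needs no stochastic Fubini for the uniqueness step and only the total-variation bound on $\mu$ --- at the cost of using the assumed continuity of the paths to guarantee that $\sup_{r\le t}\|D_r\|$ is finite; the paper's route is slightly heavier but delivers the representation formula directly, which is in the spirit of how the resolvent is used elsewhere in the appendix. Both are complete proofs of the stated theorem.
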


\begin{proof}[Sketch of proof]
	
	$\bullet$ Uniqueness: we show that any solution $X$ to \eqref{eq:stochastic integro} is given by \eqref{eq:sol}. We first write \eqref{eq:stochastic integro} in compact integral form
	$$ X = X_0+ 1*h  + 1*(\mu*X) + 1*\sigma dB.$$
 Then,  convolving both sides with $R$, and applying stochastic Fubini's theorem combined with the resolvent equation \eqref{eq:resolventeq},  leads to 
	\begin{align*}
	R*X&= R*X_0 + R*(1*h)+ R*(1*\mu*X)  + R*(1*\sigma dB)\\
	&= 1*(RX_0) + 1*(R*h) + (1*R*\mu)*X + 1*(R*\sigma dB)\\
	&= 1*(RX_0) + 1*(R*h)+ R*X - R(0)(1*X) + 1*(R*\sigma dB)
	\end{align*}
	Simplifying $R*X$ on both sides, recalling that $R(0)=I_d$, and inspecting the densities lead to 
	$$ X_t= R(t)X_0 + \int_0^t R(t-s)h(s)ds + \int_0^t R(t-s)\sigma dB_s , \quad dt \times d\P-a.e.  $$
	The conclusion follows from the  continuity of the sample paths.

	$\bullet$ Existence: we verify that $X$  given by \eqref{eq:sol} solves \eqref{eq:stochastic integro}.  We use the resolvent equation to write $R = I_d +  1*(\mu*R)$ so that  \eqref{eq:sol} reads 
	$$ X = X_0 +  1*\left((\mu*R) X_0   +  h +  (\mu*R)*h +  (\mu*R)*\sigma dB  \right) + 1*\sigma dB, $$
	which shows that $X$ is a continuous semimartingale with the following dynamics 
	\begin{align*}
	dX_t &= \left( h (t)+ \left(\mu*\left(R X_0   +   R*h +  R*\sigma dB\right)\right)(t) \right) dt + \sigma dB_t \\
	&=\left( h (t)+ \left(\mu*X\right)(t) \right) dt + \sigma dB_t
	\end{align*}
	where we made use of stochastic Fubini's theorem. The proof is complete. 
\end{proof}

\typeout{get arXiv to do 4 passes: Label(s) may have changed. Rerun}

\end{document}